\theoremstyle{plain}
\newtheorem{theorem}{Theorem}[section]
\newtheorem{lemma}[theorem]{Lemma}
\newtheorem{corollary}[theorem]{Corollary}
\theoremstyle{definition}
\newtheorem{definition}[theorem]{Definition}
\newtheorem{example}[theorem]{Example}
\newenvironment{claim}
{\par\medskip\noindent\textbf{Claim.}\ \ \ignorespaces}{\par}
\newenvironment{remark}
{\par\medskip\noindent\textbf{Remark.}\ \ \ignorespaces}{\par\medskip}
\def\arA{\mathbf A}
\def\arB{\mathbf B}
\def\arC{\mathbf C}
\def\C{\mathcal C}
\def\n{\mathbf n}
\def\lk{\operatorname{lk}}
\def\R{\operatorname{\mathcal R}}
\def\ext{{\operatorname{ext}}}
\def\Rext{\R_\ext}
\def\myangle#1{\langle #1\rangle}
\begin{document}

\title[Uniqueness of roots for some Artin groups]
{Uniqueness of roots up to conjugacy\\
for some affine and finite type Artin groups}

\author{Eon-Kyung Lee and Sang-Jin Lee}

\address{Department of Mathematics, Sejong University,
Seoul, 143-747, Korea}
\email{eonkyung@sejong.ac.kr}

\address{Department of Mathematics, Konkuk University,
Seoul, 143-701, Korea}
\email{sangjin@konkuk.ac.kr}

\date{\today}

\begin{abstract}
Let $G$ be one of the Artin groups
of finite type ${\mathbf B}_n={\mathbf C}_n$
and affine type $\tilde{\mathbf A}_{n-1}$, $\tilde{\mathbf C}_{n-1}$.
In this paper, we show that if $\alpha$ and $\beta$ are elements of $G$
such that $\alpha^k=\beta^k$ for some nonzero integer $k$,
then $\alpha$ and $\beta$ are conjugate in $G$.
For the Artin group of type $\mathbf A_n$, this was recently
proved by J.~Gonz\'alez-Meneses.

In fact, we prove a stronger theorem,
from which the above result follows easily
by using descriptions of those Artin groups
as subgroups of the braid group on $n+1$ strands.
Let $P$ be a subset of $\{1,\ldots,n\}$.
An $n$-braid is said to be \emph{$P$-pure} if its induced permutation
fixes each $i\in P$, and \emph{$P$-straight} if it is $P$-pure and
it becomes trivial when we delete all the $i$-th strands for $i\not\in P$.
Exploiting the Nielsen-Thurston classification of braids,
we show that if $\alpha$ and $\beta$ are $P$-pure $n$-braids
such that $\alpha^k=\beta^k$ for some nonzero integer $k$,
then there exists a $P$-straight $n$-braid $\gamma$
with $\beta=\gamma\alpha\gamma^{-1}$.
Moreover, if $1\in P$, the conjugating element $\gamma$
can be chosen to have the first strand algebraically unlinked with
the other strands.
Especially in case of $P=\{1,\ldots,n\}$, our result implies
the uniqueness of roots of pure braids,
which was known by V.~G.~Bardakov and by D.~Kim and D.~Rolfsen.

\medskip\noindent
\emph{Keywords:} Artin group, braid group, uniqueness of roots \\
\emph{MSC:} 20F36, 20F10
\end{abstract}

\maketitle

\section{Introduction}

Let $M$ be a symmetric $n\times n$ matrix with integer entries
$m_{ij}\in\mathbb N\cup\{\infty\}$ where
$m_{ii}=1$ and $m_{ij}\ge 2$ for $i\ne j$.
The \emph{Artin group}
of type $M$ is defined by the presentation
$$
A(M)=\langle s_1,\ldots,s_n\mid \underbrace{s_is_js_i\cdots}_{m_{ij}}
=\underbrace{s_js_is_j\cdots}_{m_{ij}}
\quad\mbox{for all $i\ne j$, $m_{ij}\ne\infty$}\rangle.
$$
The \emph{Coxeter group} $W(M)$ of type $M$ is the quotient
of $A(M)$ by the relation $s_i^2=1$.
We say that $A(M)$ is of \emph{finite type} if
the associated Coxeter group $W(M)$ is finite,
and that $A(M)$ is of \emph{affine (or Euclidean) type}
if $W(M)$ acts as a proper, cocompact group of isometries on some
Euclidean space with the generators $s_1,\ldots,s_n$ acting
as affine reflections.
It is convenient to define an Artin group by a \emph{Coxeter graph},
whose vertices are numbered $1,\ldots,n$ and which has an edge labelled $m_{ij}$
between the vertices $i$ and $j$ whenever $m_{ij}\ge 3$
or $m_{ij}=\infty$.
The label 3 is usually suppressed.

In this paper, we show the uniqueness of roots
up to conjugacy for elements of the Artin groups of
finite type $\arA_n$, $\arB_n=\arC_n$ and affine type $\tilde \arA_{n-1}$, $\tilde \arC_{n-1}$.
The Coxeter graphs associated to them are in Figure~\ref{fig:graph}.

\begin{figure}[t]
\begin{tabular}{ccccc}
\raisebox{1em}{$\arA_n$} &
\includegraphics[scale=.5]{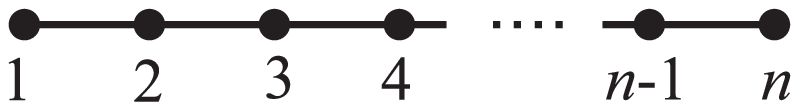} &\mbox{}\qquad\mbox{}&
\raisebox{1em}{$\tilde \arA_{n-1}$} &
\includegraphics[scale=.5]{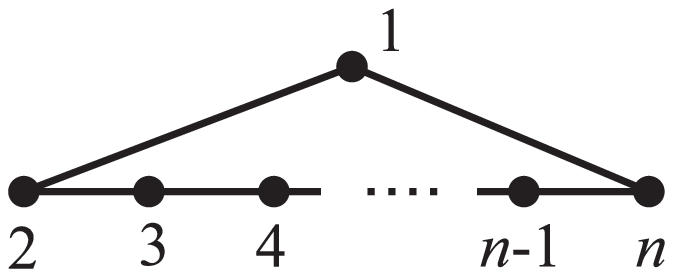}\\
\raisebox{1em}{$\arB_n$} &
\includegraphics[scale=.5]{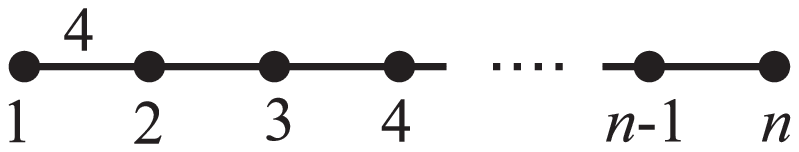} &&
\raisebox{1em}{$\tilde \arC_{n-1}$} &
\includegraphics[scale=.5]{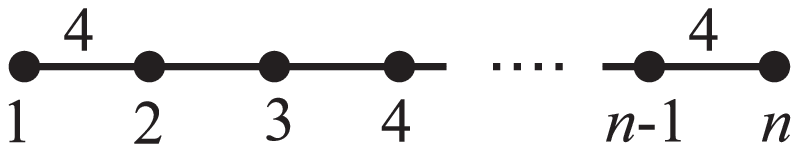}
\end{tabular}
\caption{Coxeter graphs}
\label{fig:graph}
\end{figure}

\begin{theorem}\label{thm:artin}
Let\/ $G$ denote one of the Artin groups of finite type
$\arA_n$, $\arB_n=\arC_n$ and affine type $\tilde \arA_{n-1}$, $\tilde \arC_{n-1}$.
If\/ $\alpha,\beta\in G$ are such that
$\alpha^k=\beta^k$ for some nonzero integer $k$,
then $\alpha$ and $\beta$ are conjugate in $G$.
\end{theorem}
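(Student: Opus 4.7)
The plan is to reduce Theorem~\ref{thm:artin} to the stronger $P$-pure braid theorem advertised in the abstract: \emph{if $\alpha,\beta$ are $P$-pure $n$-braids with $\alpha^k=\beta^k$ for some nonzero $k$, then there is a $P$-straight braid $\gamma$ with $\beta=\gamma\alpha\gamma^{-1}$, and when $1\in P$ the braid $\gamma$ may be taken to have its first strand algebraically unlinked with the others.} The first step is to describe each of the four Artin groups as a subgroup of some braid group $B_{n+1}$ consisting of $P$-pure braids. Type $\arA_n$ is literally $B_{n+1}$ with $P=\emptyset$; type $\arB_n=\arC_n$ sits inside $B_{n+1}$ via the classical embedding as the stabilizer of the first strand, so $P=\{1\}$; the affine types $\tilde\arA_{n-1}$ and $\tilde\arC_{n-1}$ admit analogous descriptions as subgroups of braids fixing a single strand, using embeddings available in the literature. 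In each case, membership in the Artin subgroup is governed by $P$-purity of the underlying braid together with (for the affine cases, where $1\in P$) an algebraic linking condition on the distinguished strand. Granting these descriptions, any $\alpha,\beta\in G$ with $\alpha^k=\beta^k$ are $P$-pure braids satisfying the hypothesis of the stronger theorem, and the conjugator $\gamma$ it produces lies in $G$, yielding Theorem~\ref{thm:artin}.

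The substantive task is therefore to prove the $P$-pure braid theorem itself. I would proceed by induction on $n$, splitting into cases according to the Nielsen--Thurston type of $\alpha$ (equivalently of $\alpha^k$). In the periodic case, $\alpha$ is a root of a central power of $B_n$ and its centralizer is explicit enough to build $\gamma$ directly. In the pseudo-Anosov case, the centralizer of $\alpha^k$ in the mapping class group is virtually cyclic modulo the center; an initial conjugator $\delta$ obtained from Gonz\'alez-Meneses's uniqueness-of-roots theorem for general braids can then be corrected by elements of the centralizer of $\alpha$ to simultaneously achieve $P$-straightness and the extra linking condition on strand~$1$. In the reducible case, one passes to the decomposition of the disk along the canonical reduction system of $\alpha^k$, applies the induction hypothesis inside each component tube and to the external quotient braid, and reassembles the pieces into a single conjugator.

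The hardest step will be the reducible case, specifically the promotion of a $P$-pure conjugator assembled from the components to a globally $P$-straight one while keeping the conjugation identity intact. A piecewise conjugator will in general braid the $P$-indexed strands around the complementary strands, and this excess braiding must be cancelled without disturbing its effect on $\alpha$. The natural tool is multiplication by central elements of suitable parabolic subbraid groups---powers of full twists on groups of strands lying inside a single reduction tube---which commute with the corresponding piece of $\alpha$ and can absorb unwanted windings. The refinement that strand~$1$ is algebraically unlinked with the others, crucial for the affine cases, is an abelianization-level linking condition and should be correctable by a similar central adjustment restricted to strand~$1$. Verifying that such corrections exist at every level of the reducible decomposition, and that the associated bookkeeping closes up inductively, is where most of the effort will go.
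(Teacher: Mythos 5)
Your first paragraph is exactly the paper's proof of Theorem~\ref{thm:artin}: dispose of $\arA_n$ by Gonz\'alez-Meneses, and realize $A(\arB_n)$, $A(\tilde\arA_{n-1})$, $A(\tilde\arC_{n-1})$ as the $1$-pure, the $1$-pure-and-$1$-unlinked, and the $\{1,n+1\}$-pure subgroups of $B_{n+1}$, so that the $P$-straight, $1$-unlinked conjugator furnished by the $P$-pure theorem lies in the subgroup. (Minor point: for $\tilde\arC_{n-1}$ the relevant condition is $\{1,n+1\}$-purity, not a linking condition; the linking condition is what $\tilde\arA_{n-1}$ needs.) The real burden of your proposal is therefore the $P$-pure theorem itself, and there your sketch has two genuine gaps. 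In the pseudo-Anosov case you propose to start from a conjugator given by Gonz\'alez-Meneses and correct it by elements of the centralizer of $\alpha$; but the conjugators from $\alpha$ to $\beta$ form a single coset of that centralizer, and such a coset need not contain a $P$-straight or even $1$-pure element --- the paper's example $\alpha_2=\sigma_1^2\sigma_2^4$, $\beta_2=\sigma_2^2\sigma_1^4$ gives conjugate pseudo-Anosov $3$-braids with $\lk(\alpha_2)\ne\lk(\beta_2)$, hence with no $1$-pure conjugator whatsoever. What actually saves this case is the stronger form of Gonz\'alez-Meneses's result: pseudo-Anosov $k$-th roots are unique on the nose, so $\alpha=\beta$ and $\gamma=1$ works. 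Your sketch never invokes this.

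The second and more serious gap is in the reducible case. Your proposed correction device --- powers of full twists on the strands inside a single reduction tube --- cannot produce the needed adjustments: a full twist on a tube containing two or more $P$-punctures is not $P$-straight, and a full twist on a tube not containing the first strand contributes $0$ to $\lk$, so these elements cannot cancel the winding of strand~$1$ that obstructs $1$-unlinkedness. The critical subcase is $\alpha_\ext$ periodic and non-central, where the tubes other than the one containing strand~$1$ are permuted in cycles of length at least $2$ and the Gonz\'alez-Meneses conjugator $\zeta=\myangle{\zeta_0}_\n(\zeta_1\oplus\cdots\oplus\zeta_r)_\n$ has $\lk(\zeta)=\sum_{i\ge 2}n_i\lk_i(\zeta_0)$, generally nonzero. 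One must therefore manufacture, for each tube size $n_i$, a $P$-straight braid commuting with $\alpha$ whose linking number is exactly $n_i$; the paper does this in Lemma~\ref{lem:per-ext} with the braids $\myangle{\mu_{s,j}}_\n(\cdots)$, whose first strand travels once around an entire cycle of tubes and whose interior blocks are assembled from the interior components of $\alpha$ precisely so as to commute with it. Nothing in your sketch supplies these elements, and without them the ``abelianization-level'' correction you describe cannot be carried out; nor do you address the cyclic bookkeeping for tubes permuted nontrivially by the exterior braid, which is where Lemma~\ref{lem:same-ext} does its real work.
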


In fact, we prove a stronger theorem.
Before stating it, let us explain the motivations.
The Artin group $A(\arA_n)$ is well-known as $B_{n+1}$,
the braid group on $n+1$ strands.
The generators of $B_{n+1}$ are usually written as $\sigma_i$,
hence it has the presentation
$$
B_{n+1}=\left\langle\sigma_1,\ldots,\sigma_n\biggm|
\begin{array}{ll}
\sigma_i\sigma_j=\sigma_j\sigma_i & \mbox{if } |i-j| > 1, \\
\sigma_i\sigma_{j}\sigma_i=\sigma_{j}\sigma_i\sigma_{j}
& \mbox{if } |i-j| = 1.
\end{array}
\right\rangle.
$$

The following are well-known theorems on
uniqueness of roots of braids.

\begin{theorem}[J.~Gonz\'alez-Meneses~\cite{Gon03}]\label{thm:gon}
Let $\alpha$ and $\beta$ be $n$-braids
such that $\alpha^k=\beta^k$ for some nonzero integer $k$.
Then $\alpha$ and $\beta$ are conjugate in $B_n$.
\end{theorem}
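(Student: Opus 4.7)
The plan is to invoke the Nielsen--Thurston classification of braids viewed as mapping classes of the $n$-punctured disk: every $\alpha \in B_n$ is either \emph{periodic} (some power is central), \emph{reducible} (fixes, up to isotopy, a nonempty finite family of disjoint essential simple closed curves), or \emph{pseudo-Anosov}. Since the Nielsen--Thurston type is preserved under taking nonzero powers, the hypothesis $\alpha^k = \beta^k$ forces $\alpha$ and $\beta$ to be of the same type, so one can argue in each case separately.

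In the pseudo-Anosov case, $\beta$ commutes with the pseudo-Anosov element $\alpha^k$, hence belongs to its centralizer, which is virtually cyclic and controlled by the dilatation and the transverse measured foliations. A direct analysis shows that a pseudo-Anosov braid admits at most one $k$-th root in this centralizer, which yields $\alpha=\beta$ and is stronger than the claimed conclusion. In the periodic case, $\alpha$ and $\beta$ are each conjugate to a power of one of two explicit periodic braids of $B_n$, and uniqueness up to conjugacy follows from a finite computation inside the cyclic quotient of the centralizer of a generator of the center of $B_n$.

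The reducible case is handled by induction on $n$. The canonical reduction system of the common power $\alpha^k = \beta^k$ is well defined and is preserved setwise by both $\alpha$ and $\beta$, and these braids permute the components of this system in the same way. Cutting the punctured disk along the reducing curves produces, for each of $\alpha$ and $\beta$, a family of \emph{interior braids} acting on the subdisks bounded by the curves, together with an \emph{exterior braid} on the quotient surface. Corresponding interior braids satisfy the same $k$-th power equality on strictly fewer strands, so the induction hypothesis applies to them, while the exterior braid is either periodic or pseudo-Anosov on a simpler surface and is covered by the previous two cases.

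The main obstacle is the \emph{gluing step}: the partial conjugators produced on the interior components and on the exterior must be reassembled into a single element $\gamma \in B_n$ realizing $\beta = \gamma\alpha\gamma^{-1}$. This requires selecting the interior conjugators compatibly along each orbit of components permuted by the braids, absorbing the ambiguity by powers of full twists inside each tube so that the tube-conjugators mesh with the conjugator of the exterior braid, and verifying that the resulting braid simultaneously implements the conjugation on every piece. This bookkeeping, and the careful matching between interior and exterior data, is the technical heart of the argument.
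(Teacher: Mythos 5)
First, a point of reference: the paper does not prove this statement at all. Theorem~\ref{thm:gon} is imported verbatim from Gonz\'alez-Meneses~\cite{Gon03} and used as a black box (in Lemma~\ref{lem:NT}~(ii), inside Lemma~\ref{lem:same-ext} for the interior braids with $P_{\n,i}=\emptyset$, and to produce the initial conjugator $\zeta$ in the proof of Theorem~\ref{thm:main}). So there is no in-paper proof to compare against; what can be said is that your outline follows the same Nielsen--Thurston trichotomy plus induction-on-reducible-pieces strategy that \cite{Gon03} actually uses, and that the paper itself re-runs for its stronger Theorem~\ref{thm:main}.

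As a proof, however, what you have written is a plan rather than an argument, and the gap sits exactly where you locate it. In the reducible case you must choose, for each orbit $(i,1),\dots,(i,r_i)$ of tubes permuted by the exterior braid, interior conjugators that are compatible around the cycle: the products $\tilde\alpha_{i,r_i}=\alpha_{i,1}\cdots\alpha_{i,r_i}$ and $\tilde\beta_{i,r_i}=\beta_{i,1}\cdots\beta_{i,r_i}$ satisfy $\tilde\alpha_{i,r_i}^{p_i}=\tilde\beta_{i,r_i}^{p_i}$ only after raising $k$ to a multiple of the cycle lengths, one then needs a conjugator $\zeta_i$ between these return maps (which already presupposes the theorem in lower index, or the pseudo-Anosov/periodic cases for the return map), and the individual tube conjugators must be defined by a telescoping formula of the shape $\gamma_{i,j}=(\beta_{i,j}^{-1}\cdots\beta_{i,1}^{-1})\,\zeta_i\,(\alpha_{i,1}\cdots\alpha_{i,j})$ and verified to satisfy $\gamma_{i,j-1}\alpha_{i,j}\gamma_{i,j}^{-1}=\beta_{i,j}$ for all $j$, including the wrap-around index. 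None of this is carried out in your proposal; ``absorbing the ambiguity by powers of full twists'' gestures at it but does not do it. (The paper's proof of Lemma~\ref{lem:same-ext} executes precisely this computation and is a good model for what is missing.) Two further soft spots: in the pseudo-Anosov case the assertion that the centralizer analysis ``yields $\alpha=\beta$'' is the nontrivial content of Lemma~\ref{lem:NT}~(ii) and needs the argument that a root of a pseudo-Anosov preserving its invariant foliations is determined by its power; and in the reducible case you should also say why the exterior braids of $\alpha$ and $\beta$ can be arranged to coincide (not merely be conjugate) before the interior induction starts, since otherwise the exterior and interior conjugators need not assemble into a single braid.
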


\begin{theorem}[V.~G.~Bardakov~\cite{Bar},
D.~Kim and D.~Rolfsen~\cite{KR03}]\label{thm:KR03}
Let $\alpha$ and $\beta$ be pure braids
such that $\alpha^k=\beta^k$ for some nonzero integer $k$.
Then $\alpha$ and $\beta$ are equal.
\end{theorem}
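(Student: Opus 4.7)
The plan is to derive Theorem~\ref{thm:KR03} from the fact that the pure braid group is bi-orderable. Recall a group is bi-orderable if it admits a total order $<$ invariant under both left and right multiplication; in any such group, $k$-th roots are automatically unique, for if $\alpha^k=\beta^k$ with $k>0$ and $\alpha\ne\beta$, one may assume $\alpha<\beta$, and then bi-invariance yields $\alpha^k<\beta^k$ by a trivial induction on $k$, a contradiction (the case $k<0$ follows by inverting). So the theorem is reduced to producing a bi-order on the pure braid group $P_n$.

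I would build such a bi-order by induction on $n$, using the Fadell--Neuwirth split short exact sequence
$$
1\longrightarrow F_{n-1}\longrightarrow P_n\longrightarrow P_{n-1}\longrightarrow 1
$$
obtained by deleting the last strand, which identifies $P_n$ with the semidirect product $F_{n-1}\rtimes P_{n-1}$. Free groups are bi-orderable via the Magnus embedding into the formal power-series ring $\mathbb{Z}\langle\langle X_1,\ldots,X_{n-1}\rangle\rangle$, comparing leading monomials of the expansions. A standard lemma on ordered groups asserts that a semidirect product $F\rtimes_\phi G$ of bi-ordered groups is itself bi-orderable as soon as $\phi$ lands in the subgroup of order-preserving automorphisms of $F$. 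The base case $n=2$ is immediate, since $P_2\cong\mathbb Z$.

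The main obstacle, which is where the real content sits, is verifying that the Artin action of $P_{n-1}$ on $F_{n-1}$ preserves this Magnus bi-order. The pivotal structural input is that a pure braid acts on the free group by a \emph{basis-conjugating} automorphism, sending each free generator $x_i$ to a conjugate $w_i\, x_i\, w_i^{-1}$. Such automorphisms act trivially on the abelianization $\mathbb{Z}^{n-1}$, and the map they induce on the associated graded of the lower central series is a Lie-algebra automorphism; since that graded object is the free Lie algebra generated in degree one, triviality on degree-one generators forces triviality in every degree. Translating back via Magnus, the leading term of the expansion of any word is preserved, whence the bi-order itself is preserved. Once this compatibility is secured, the induction closes and Theorem~\ref{thm:KR03} follows at once from the bi-order criterion of the first paragraph.
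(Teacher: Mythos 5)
Your argument is correct and is essentially the proof the paper itself records: immediately after stating Theorem~\ref{thm:KR03} the authors give exactly your first paragraph (uniqueness of roots in a bi-ordered group) and cite Kim--Rolfsen for the bi-orderability of the pure braid group, which you additionally flesh out via the Fadell--Neuwirth splitting, the Magnus ordering, and the fact that basis-conjugating automorphisms act trivially on the associated graded of the lower central series. The paper also observes a second derivation of the same statement as an immediate corollary of its Theorem~\ref{thm:main} (a $\{1,\ldots,n\}$-straight conjugating braid is necessarily trivial), but your route coincides with the first, cited one.
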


Theorem~\ref{thm:gon} was conjectured by G.~S.~Makanin~\cite{Mak71}
in the early seventies, and proved recently
by J.~Gonz\'alez-Meneses.
Thus the new contribution of Theorem~\ref{thm:artin}
is for the Artin groups of type $\arB_n=\arC_n$,
$\tilde \arA_{n-1}$ and $\tilde \arC_{n-1}$.
Theorem~\ref{thm:KR03} was first proved by V.~G.~Bardakov
by combinatorial arguments, and it follows easily
from the bi-orderability of pure braids by D.~Kim and D.~Rolfsen.
(To see this, let $<$ be a bi-ordering of pure braids.
If $\alpha>\beta$ (resp. $\alpha<\beta$),
then $\alpha^k>\beta^k$ (resp. $\alpha^k<\beta^k$) for all $k\ge 1$.
Therefore, $\alpha^k=\beta^k$ implies $\alpha=\beta$.)

It is worth mentioning that D.~Bessis showed
the uniqueness of roots up to conjugacy
for periodic elements in the braid groups of
irreducible well-generated complex reflection groups,
and hence for periodic elements in finite type Artin groups:
if $G$ is the braid group of an irreducible well-generated
complex reflection group and if $\alpha,\beta\in G$ are such that
$\alpha$ has a central power and $\alpha^k=\beta^k$ for some nonzero integer $k$,
then $\alpha$ and $\beta$ are conjugate in $G$~\cite[Theorem 12.5~(ii)]{Bes06}.

Comparing the above two theorems, we can see that
one obtains a stronger result for pure braids.
Motivated by the above observation,
we study the case of ``partially pure'' braids---that is,
braids some of whose strands are pure.
Moreover, the Artin groups $A(\arB_n)$, $A(\tilde \arA_{n-1})$
and $A(\tilde \arC_{n-1})$ are isomorphic
to some subgroups of $B_{n+1}$, which can be described
by pure strands and linking number of the first strand
with the other strands.
In order to deal with ``partially pure'' braids and elements of
those Artin groups simultaneously, we introduce the following definitions.

\begin{definition}
For an $n$-braid $\alpha$, let $\pi_\alpha$ denote
the induced permutation of $\alpha$.
\begin{itemize}
\item
For an $n$-braid $\alpha$ and an integer $1\le i\le n$,
we say that $\alpha$ is \emph{$i$-pure},
or the $i$-th strand of $\alpha$ is \emph{pure},
if $\pi_\alpha(i)=i$.
\item
Let $B_{n,1}$ denote the subgroup of $B_n$ consisting
of 1-pure braids.
\item
Let $P\subset\{1,\ldots,n\}$.
An $n$-braid $\alpha$ is said to be
\emph{$P$-pure} if $\alpha$ is $i$-pure for each $i\in P$.
Note that $\{1,\ldots,n\}$-pure braids are
nothing more than pure braids in the usual sense.

\item
Let $P\subset\{1,\ldots,n\}$.
An $n$-braid is said to be \emph{$P$-straight}
if it is  $P$-pure and it becomes trivial
when we delete all the $i$-th strands for $i\not\in P$.
Note the following:
if $|P|=1$, then a braid is $P$-pure if and only if
it is $P$-straight;
if $|P|=n$ and $\alpha$ is a $P$-straight $n$-braid,
then $\alpha$ is the identity;
a braid $\alpha$ is called a \emph{brunnian braid}
if it is $P$-straight for all $P$ with $|P|=n-1$.
\end{itemize}
\end{definition}

For example, the braid in Figure~\ref{fig:p-pure}
is $\{1,4,5\}$-pure, $\{1,4\}$-straight and $\{1,5\}$-straight.

\begin{definition}
There is a homomorphism $\lk:B_{n,1}\to \mathbb Z$ which measures
the linking number of the first strand with the other strands:
let $\sigma_1,\ldots,\sigma_{n-1}$ be the Artin generators for $B_n$,
then $B_{n,1}$ is generated
by $\sigma_1^2,\sigma_2,\sigma_3,\ldots,\sigma_{n-1}$,
and the homomorphism $\lk$ is defined by $\lk(\sigma_1^2)=1$ and
$\lk(\sigma_i)=0$ for $i\ge 2$.
Note that $\lk(\cdot )$ is a conjugacy invariant in $B_{n,1}$
because $\lk(\gamma\alpha\gamma^{-1})=\lk(\gamma)+\lk(\alpha)-\lk(\gamma)=\lk(\alpha)$
for any $\alpha,\gamma\in B_{n,1}$.
A braid $\alpha$ is said to be \emph{1-unlinked}
if it is 1-pure and $\lk(\alpha)=0$.
For example, the braid in Figure~\ref{fig:p-pure}
is 1-unlinked.
\end{definition}

\begin{figure}
$$
\includegraphics[scale=.9]{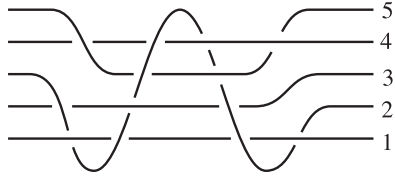}
$$
\caption{This braid is $\{1,4,5\}$-pure, $\{1,4\}$-straight,
$\{1,5\}$-straight and 1-unlinked.}\label{fig:p-pure}
\end{figure}

It is well known that the following isomorphisms hold~\cite{Cri99, All02, CC05, BM05}:
\begin{eqnarray*}
A(\arB_n) &\simeq& B_{n+1,1};\\
A(\tilde \arA_{n-1}) &\simeq&
\{\alpha\in B_{n+1,1}\mid \mbox{$\alpha$ is 1-unlinked} \};\\
A(\tilde \arC_{n-1}) &\simeq&
\{\alpha\in B_{n+1,1}\mid \mbox{$\alpha$ is $\{1,n+1\}$-pure} \}.
\end{eqnarray*}

As we do not need to consider the Artin group $A(\arA_n)$
due to Theorem~\ref{thm:gon},
it suffices to consider 1-pure braids.
From now on, we restrict ourselves to $B_{n,1}$,
the group of 1-pure braids on $n$ strands.
Exploiting the Nielsen-Thurston classification of braids,
we establish the following theorem.

\begin{theorem}\label{thm:main}
Let $P$ be a subset of\/ $\{1,\ldots,n\}$ with $1\in P$.
Let $\alpha$ and $\beta$ be $P$-pure $n$-braids
such that $\alpha^k=\beta^k$ for some nonzero integer $k$.
Then there exists a $P$-straight, 1-unlinked $n$-braid $\gamma$
with $\beta=\gamma\alpha\gamma^{-1}$.
\end{theorem}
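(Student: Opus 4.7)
The plan is to argue by induction on $n$, following the broad architecture of Gonz\'alez-Meneses' proof of Theorem~\ref{thm:gon} and exploiting the Nielsen-Thurston classification of $\alpha$ viewed as a mapping class of the $n$-punctured disk. The base cases $n\le 2$ are immediate: $n=1$ is trivial, and $B_{2,1}\cong\mathbb Z$ so $\alpha^k=\beta^k$ forces $\alpha=\beta$. For the inductive step one applies the Nielsen-Thurston trichotomy to $\alpha$ and handles the three cases separately.

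If $\alpha$ is pseudo-Anosov, then $\beta$ commutes with $\alpha^k$, hence lies in $Z(\alpha^k)=Z(\alpha)$, which is virtually cyclic; the equation $\alpha^k=\beta^k$ in such a group forces $\alpha=\beta$, and we may take $\gamma=1$. If $\alpha$ is periodic, one uses the rigidity of periodic $1$-pure braids (conjugates of specific roots of the full twist $\Delta^2$) to write down an explicit $P$-straight, $1$-unlinked conjugator; alternatively, Bessis' result~\cite{Bes06} supplies \emph{some} conjugator which is then normalized. The reducible case is where the induction is applied. Let $\R(\alpha)$ denote the canonical reduction system of $\alpha$; since $\R(\alpha)=\R(\alpha^k)=\R(\beta^k)=\R(\beta)$, the braids $\alpha$ and $\beta$ share a common CRS. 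Cutting along it decomposes $\alpha$ and $\beta$ into an ``exterior'' braid on fewer strands and several ``interior'' braids (one for each innermost reducing curve), and $\alpha^k=\beta^k$ descends to the corresponding equation on each piece. Because $1\in P$ and $\alpha$ is $1$-pure, the first strand lies in a distinguished component, which lets one distribute $P$ among the pieces so that the condition ``$1$ is in the distinguished subset'' is preserved at each sub-level. The inductive hypothesis applied to each piece yields $P$-straight, $1$-unlinked conjugators that reassemble into a single $P$-straight, $1$-unlinked conjugator $\gamma$ for the original $\alpha$ and $\beta$.

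The main obstacle is producing a conjugator that simultaneously satisfies the $P$-straight and the $1$-unlinked conditions. The $P$-straight condition is a combinatorial constraint which must be tracked through the reduction-and-reassembly process in the reducible case; at each step one must choose among all valid conjugators one whose strand-deletions outside $P$ produce the trivial braid. The $1$-unlinked condition is a sharp $\mathbb Z$-valued equation $\lk(\gamma)=0$ which cannot be satisfied merely up to torsion, so the argument must locate a compensating element of $Z(\alpha)\cap B_{n,1}$ of precisely the right linking number. Full twists involving the first strand, which automatically centralize $1$-pure braids, provide the pool from which this correction is drawn; but coordinating these corrections with $P$-straightness throughout the induction is where the bookkeeping gets delicate, and this is the step I expect to be the hardest to execute cleanly.
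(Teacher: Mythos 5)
Your high-level architecture (induction on $n$, Nielsen--Thurston trichotomy, cutting along the canonical reduction system) is the same as the paper's, but two of the steps you wave at are precisely where the real content lies, and as sketched they do not go through. First, in the reducible case the equation $\alpha^k=\beta^k$ does \emph{not} ``descend to the corresponding equation on each piece.'' Writing $\alpha=\myangle{\alpha_0}_\n(\alpha_1\oplus\cdots\oplus\alpha_r)_\n$, the exterior braid $\alpha_0$ permutes the interior tubes; for a tube lying in a nontrivial cycle of $\pi_{\alpha_0}$ the $k$-th power of $\alpha$ records only the product $\tilde\alpha_{i,j}=\alpha_{i,j-r_i+1}\cdots\alpha_{i,j}$ around the cycle, so one gets $\tilde\alpha_{i,j}^{p_i}=\tilde\beta_{i,j}^{p_i}$ rather than $\alpha_{i,j}^k=\beta_{i,j}^k$, and the induction hypothesis cannot be ``applied to each piece.'' The paper handles this (Lemma~\ref{lem:same-ext}) by invoking Theorem~\ref{thm:gon} to conjugate the cycle products and then building the interior conjugators by an explicit telescoping formula $\gamma_{i,j}=(\beta_{i,j}^{-1}\cdots\beta_{i,1}^{-1})\zeta_i(\alpha_{i,1}\cdots\alpha_{i,j})$; the induction hypothesis is used only on the tubes fixed by $\pi_{\alpha_0}$ that meet $P$. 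Moreover your case analysis omits the key dichotomy on the Nielsen--Thurston type of the exterior braid $\alpha_0$ itself: the argument splits according to whether $\alpha_0$ is pseudo-Anosov/central (where one can force $\alpha_\ext=\beta_\ext$) or periodic non-central (where one cannot, and a different argument is needed).

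Second, your proposed pool of linking-number corrections is wrong. ``Full twists involving the first strand'' do not centralize a general $1$-pure braid: $\sigma_1^2$ is such a twist and commutes with almost nothing. The elements that do obviously centralize $\alpha$ --- powers of $\Delta^2$, or Dehn twists along curves of $\R(\alpha)$ --- are useless here: $\Delta^{2j}$ only realizes linking numbers in $(n-1)\mathbb Z$, and a twist along the reduction curve containing the first puncture fails to be $P$-straight as soon as $P$ contains a second point of that tube (which is the generic situation, since $P=P_{\n,1}$ when $\alpha_0$ is periodic non-central). The value you must cancel is $\lk(\xi)=\sum_{i\ge 2}n_i\lk_i(\zeta_0)$, an arbitrary integer combination of the tube sizes $n_i$, so you need, for each $i\ge 2$, a $P$-straight element of the centralizer of $\alpha$ with linking number exactly $n_i$. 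Producing these is the paper's Lemma~\ref{lem:per-ext}: it exploits the periodicity of $\alpha_0$ to build commuting braids $\myangle{\mu_{s,j}}_\n(\cdots)$ from partial rotations $\mu_{s,j}$ of the punctured disk, with interior blocks $\gamma_{j,\ell}=\alpha_{j,\ell-(a-1)t}\cdots\alpha_{j,\ell-t}\alpha_{j,\ell}$ chosen so that the whole thing commutes with $\alpha$, is $P$-straight (it only moves tubes disjoint from $P$), and has $\lk=n_{[j,1]}$. This construction is the crux of the theorem and is entirely absent from your proposal; without it the $1$-unlinked requirement cannot be met.
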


Applying Theorem~\ref{thm:main} to $\{1\}$-pure $(n+1)$-braids
(resp. $\{1,n+1\}$-pure $(n+1)$-braids),
we have Theorem~\ref{thm:artin} for $A(\arB_n)$ and $A(\tilde\arA_{n-1})$
(resp. for $A(\tilde\arC_{n-1})$).

\smallskip

We close this section with some remarks.
An easy consequence of Theorem~\ref{thm:artin} is the following.
\begin{quote}
Let $G$ denote one of the Artin groups of finite type
$\arA_n$, $\arB_n=\arC_n$ and affine type $\tilde \arA_{n-1}$, $\tilde \arC_{n-1}$.
Let $\alpha,\beta\in G$ and let $k$ be a nonzero integer.
Then $\alpha$ is conjugate to $\beta$ if and only if\/
$\alpha^k$ is conjugate to $\beta^k$.
\end{quote}

\smallskip

Theorem~\ref{thm:KR03} follows easily from Theorem~\ref{thm:main}:
Let $\alpha$ and $\beta$ be pure $n$-braids with $\alpha^k=\beta^k$.
In our terminology, both $\alpha$ and $\beta$
are $\{1,\ldots,n\}$-pure, hence there exists
a $\{1,\ldots,n\}$-straight $n$-braid $\gamma$ such that
$\beta=\gamma\alpha\gamma^{-1}$.
Because $\gamma$ is $\{1,\ldots,n\}$-straight,
we have $\gamma=1$, hence $\alpha=\beta$.

\smallskip

Theorem~\ref{thm:artin}, even for $A(\arB_n)$, does not follow
easily from Theorem~\ref{thm:gon} because
there are 1-pure braids that are conjugate in $B_n$,
but not in the subgroup $B_{n,1}$, as in the following example.

\begin{example}
Consider the 1-pure 3-braids which are depicted in
Figure~\ref{fig:B31}:
$$
\left\{\begin{array}{l}
\alpha_1=\sigma_1^2,\\
\beta_1=\sigma_2^2,\end{array}\right.
\qquad\mbox{and}\qquad
\left\{\begin{array}{l}
\alpha_2=\sigma_1^2\sigma_2^4,\\
\beta_2=\sigma_2^2\sigma_1^4.
\end{array}\right.
$$
Because $\Delta\alpha_i\Delta^{-1}=\beta_i$ for $i=1,2$,
where $\Delta=\sigma_1\sigma_2\sigma_1$,
the braid $\alpha_i$ is conjugate to $\beta_i$ in $B_3$.
However, $\alpha_i$ is not conjugate to $\beta_i$ in $B_{3,1}$ for $i=1, 2$
because $\lk(\alpha_1)=\lk(\alpha_2)=1$,
$\lk(\beta_1)=0$ and $\lk(\beta_2)=2$.
Note that $\alpha_1$ and $\beta_1$ are reducible,
and that $\alpha_2$ and $\beta_2$ are pseudo-Anosov.
\end{example}

\begin{figure}\center
\begin{tabular}{cccc}
\mbox{}~\includegraphics[scale=1.2]{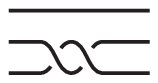}~\mbox{} &
\mbox{}~\includegraphics[scale=1.2]{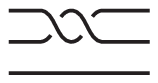}~\mbox{} &
\mbox{}~\includegraphics[scale=1.2]{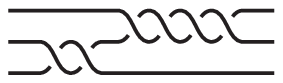}~\mbox{} &
\mbox{}~\includegraphics[scale=1.2]{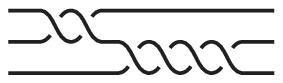}~\mbox{}\\
$\alpha_1=\sigma_1^2$ &
$\beta_1=\sigma_2^2$ &
$\alpha_2=\sigma_1^2\sigma_2^4$ &
$\beta_2=\sigma_2^2\sigma_1^4$
\end{tabular}
\caption{$\alpha_i, \beta_i\in B_{n,1}$ are conjugate in $B_n$ but not in $B_{n,1}$.}
\label{fig:B31}
\end{figure}

\section{Preliminaries}
Here, we review basic definitions and results on braids.
See~\cite{Art25,Bir,Thu88,FLP79,GW04,LL08}. Let
$D^2=\{z\in\mathbb{C}: |z|\le n+1\}$, and let $D_n$ be the
$n$-punctured disk $D^2\setminus\{1,2,\ldots,n\}$. The Artin braid
group $B_n$ is the group of automorphisms of $D_n$ that fix the
boundary pointwise, modulo isotopy relative to the boundary.
Geometrically, an $n$-braid can be interpreted as an isotopy class
of the collections of pairwise disjoint $n$ strands $l=l_1\cup\cdots
\cup l_n\subset D^2\times[0,1]$ such that $l\cap (D^2\times\{t\})$
consists of $n$ points for each $t\in[0,1]$, and, in particular, it
is $\{(1,t),\ldots,(n,t) \}$ for $t\in\{0,1\}$. The admissible
isotopies lie in the interior of $D^2\times [0,1]$. The center of
the $n$-braid group $B_n$ is infinite cyclic generated by
$\Delta^2$, where $\Delta=\sigma_1(\sigma_2\sigma_1)\cdots
(\sigma_{n-1}\cdots\sigma_1)$.

The well-known Nielsen-Thurston classification
of mapping classes of punctured surfaces into periodic,
reducible and pseudo-Anosov ones~\cite{Thu88,FLP79}
yields an analogous classification of braids:
an $n$-braid $\alpha$ is \emph{periodic}
if some power of $\alpha$ is central;
$\alpha$ is \emph{reducible} if there exists an essential curve
system in $D_n$ which is invariant up to isotopy under the action of $\alpha$;
$\alpha$ is \emph{pseudo-Anosov} if no non-trivial power of $\alpha$ is
reducible.

\begin{lemma}\label{lem:NT}
Let $\alpha,\beta\in B_n$ be such that $\alpha^k=\beta^k$
for a nonzero integer $k$. Then
\begin{itemize}
\item[(i)]
$\alpha$ and $\beta$ are of the same Nielsen-Thurston type;
\item[(ii)]
if $\alpha$ is pseudo-Anosov, then $\alpha=\beta$.
\end{itemize}
\end{lemma}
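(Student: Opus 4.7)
The plan is to deduce both parts of the lemma from the structural characterisations of Nielsen--Thurston types recalled just above, together with a well-known fact about centralisers of pseudo-Anosov elements.

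For part~(i), I would argue case by case. A braid is periodic precisely when some non-trivial power is central, and this property is clearly transferred by $k$-th roots and powers: if $\alpha^m$ is central for some $m\ne 0$, then $\beta^{km}=\alpha^{km}$ is central, so $\beta$ is periodic, and conversely. Similarly, $\alpha$ is pseudo-Anosov if and only if $\alpha^k$ is, for any nonzero $k$: the forward direction is immediate from the definition, while the backward direction holds because a reducible power $\alpha^m$ would make $\alpha^{mk}=(\alpha^k)^m$ a reducible nontrivial power of $\alpha^k$. Combining these equivalences with the hypothesis $\alpha^k=\beta^k$ gives the same conclusion for $\beta$ as for $\alpha$. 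The remaining reducible-non-periodic case then follows by elimination, since the three Nielsen--Thurston classes partition $B_n$.

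For part~(ii), I would invoke the standard fact that the centraliser $Z_{B_n}(\alpha^k)$ of a pseudo-Anosov braid is free abelian, of rank at most two (one factor being generated by the centre $\langle\Delta^2\rangle$). This follows from the uniqueness of the stable and unstable measured foliations of a pseudo-Anosov braid, which forces any commuting element to preserve them, together with the torsion-freeness of $B_n$. Granting this, both $\alpha$ and $\beta$ lie in the abelian group $Z_{B_n}(\alpha^k)$, so the equation $\alpha^k=\beta^k$ rewrites as $(\alpha\beta^{-1})^k=1$, and torsion-freeness of $B_n$ then forces $\alpha\beta^{-1}=1$, that is, $\alpha=\beta$.

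The main obstacle is really only the abelianness of the pseudo-Anosov centraliser used in~(ii). A fully self-contained derivation would go through McCarthy's theorem on centralisers of pseudo-Anosov mapping classes (or its braid-group version), or through a direct argument using the dilatation homomorphism on the stabiliser of the invariant foliations. In the context of the present paper, however, it is most natural to treat this as a standard input from the literature, since the substantive new work lies in the reducible case handled in the sections that follow.
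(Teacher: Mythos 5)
Your argument is correct, but it is worth noting that the paper itself gives no argument here: its entire proof of this lemma is the two sentences ``(i) is well known'' and ``(ii) was proved by Gonz\'alez-Meneses~\cite{Gon03}.'' What you supply is essentially the content behind those citations. For (i), your case analysis is sound: periodicity (some nonzero power central) and pseudo-Anosovness (no nontrivial power reducible) are each detected by any fixed nonzero power, so they transfer through the equation $\alpha^k=\beta^k$, and the reducible non-periodic case follows by elimination from the trichotomy. For (ii), your route---$\alpha$ and $\beta$ both lie in the centraliser of the pseudo-Anosov braid $\alpha^k=\beta^k$, that centraliser is abelian, hence $(\alpha\beta^{-1})^k=1$ and torsion-freeness of $B_n$ finishes---is exactly the standard mechanism underlying \cite{Gon03}, and the abelianness of pseudo-Anosov centralisers that you flag as the one nontrivial input is available in the braid-group setting from Gonz\'alez-Meneses and Wiest~\cite{GW04} (or McCarthy's theorem for mapping class groups). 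The only caveats are cosmetic: under the paper's literal definitions the periodic and reducible classes overlap (central elements fix every curve system), so ``partition'' should be read as the usual trichotomy periodic\,/\,reducible non-periodic\,/\,pseudo-Anosov; and you should note explicitly that $\alpha^k$ is itself pseudo-Anosov (which your part (i) already gives) before applying the centraliser theorem to it. In short, your proposal is a correct and more informative expansion of what the paper disposes of by citation.
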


\begin{proof}
(i) is well known.
(ii) was proved by Gonz\'alez-Meneses~\cite{Gon03}.
\end{proof}

\subsection{Periodic braids}

Let $\delta=\sigma_{n-1}\cdots\sigma_1$ and $\epsilon=\delta\sigma_1$,
then $\delta^n = \Delta^2 = \epsilon^{n-1}$.
(If we need to specify the number of strands, we will write $\delta=\delta_{(n)}$,
$\epsilon = \epsilon_{(n)}$ and $\Delta = \Delta_{(n)}$.)
Note that $\delta$ and $\epsilon$ are represented
by rigid rotations of the $n$-punctured disk as in Figure~\ref{fig:circ}
when the punctures are at the center of the disk
or on a round circle centered at the origin.
By Brouwer, Ker\'ekj\'art\'o and Eilenberg,
it is known that
an $n$-braid $\alpha$ is periodic if and only if
it is conjugate to a power of either $\delta$ or
$\epsilon$~\cite{Bro19,Ker19,Eil34,BDM02}.

\begin{figure}
\begin{tabular}{ccc}
\includegraphics[scale=.65]{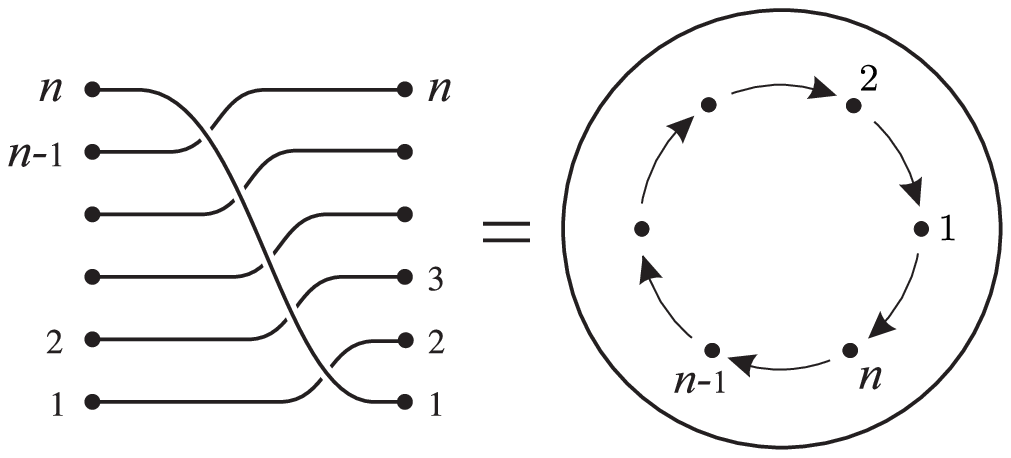}
& &
\includegraphics[scale=.65]{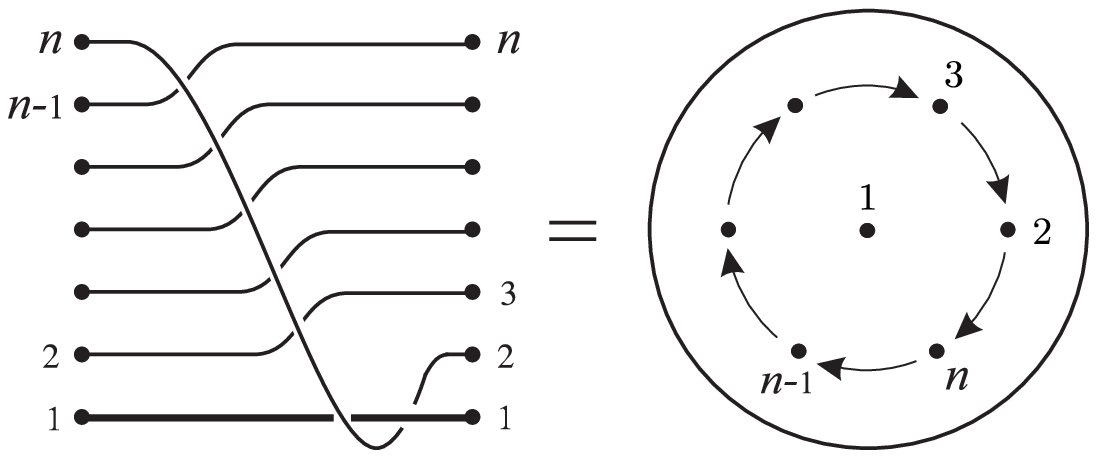}\\
(a) $\delta_{(n)} = \sigma_{n-1}\sigma_{n-2}\cdots\sigma_1 \in B_{n}$
&& (b) $\epsilon_{(n)} = \delta_{(n)}\sigma_1\in B_{n}$
\end{tabular}
\caption{
The braid $\delta_{(n)}$ is represented by the $2\pi/n$-rotation
of the $n$-punctured disk in a clockwise direction
where the punctures lie on a round circle as in (a).
The braid $\epsilon_{(n)}$ is represented by the $2\pi/(n-1)$-rotation
of the $n$-punctured disk in a clockwise direction
where one puncture is at the center
and the other $n-1$ punctures lie on a round circle as in (b).}
\label{fig:circ}
\end{figure}

\begin{lemma}\label{lem:per}
An $n$-braid $\alpha$ is periodic if and only if
$\alpha$ is conjugate to either
$\delta^m$ or $\epsilon^m$ for some integer $m$.
Further, if $\alpha$ is periodic and non-central,
then exactly one of the following holds.
\begin{itemize}
\item[(i)]
$\alpha$ is conjugate to $\delta^m$ for some $m\not\equiv 0\pmod n$.
In this case, $\alpha$ has no pure strand.
\item[(ii)]
$\alpha$ is conjugate to $\epsilon^m$ for some $m\not\equiv 0\pmod{n-1}$.
In this case, $\alpha$ has only one pure strand.
\end{itemize}
\end{lemma}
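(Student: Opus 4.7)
The plan is to decouple the two halves of the statement. The first sentence is essentially a restatement of the Brouwer--Ker\'ekj\'art\'o--Eilenberg theorem, which has just been invoked: a periodic $n$-braid is conjugate to a power of $\delta$ or $\epsilon$. The converse is immediate from $\delta^n=\epsilon^{n-1}=\Delta^2$, which shows that every power of $\delta$ or $\epsilon$ has a central power and hence so does any conjugate. Thus the real content is the second sentence, and I would prove it by a direct analysis of the induced permutations of $\delta^m$ and $\epsilon^m$, using the pictures in Figure~\ref{fig:circ}.

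Reading off the rotations in Figure~\ref{fig:circ}, the braid $\delta$ induces the $n$-cycle that cyclically permutes the $n$ punctures on the round circle, while $\epsilon$ fixes the central puncture and induces an $(n-1)$-cycle on the remaining punctures. Consequently, $\pi_{\delta^m}$ has a fixed point if and only if $m\equiv 0\pmod n$ (in which case all $n$ strands are pure), and $\pi_{\epsilon^m}$ has a fixed point other than the central one if and only if $m\equiv 0\pmod{n-1}$ (in which case all $n$ strands are pure); otherwise $\delta^m$ has no pure strand and $\epsilon^m$ has exactly one. Since the number of pure strands of a braid equals the number of fixed points of its induced permutation and is therefore a conjugacy invariant, this count transfers to every conjugate of $\delta^m$ or $\epsilon^m$.

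Now suppose $\alpha$ is periodic and non-central. By the first part, $\alpha$ is conjugate to $\delta^m$ or to $\epsilon^m$ for some $m\in\mathbb Z$. If $\alpha$ is conjugate to $\delta^m$ with $m\equiv 0\pmod n$, then $\delta^m$ is a power of $\delta^n=\Delta^2$, hence central, contradicting non-centrality; so $m\not\equiv 0\pmod n$ and, by the previous paragraph, $\alpha$ has no pure strand, giving (i). The symmetric argument using $\epsilon^{n-1}=\Delta^2$ gives (ii). Finally, the two alternatives are mutually exclusive because a braid in case (i) has zero pure strands whereas a braid in case (ii) has exactly one.

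I do not foresee a serious obstacle: the only thing that requires a moment's care is verifying that $\delta^m$ with $n\mid m$ (resp.\ $\epsilon^m$ with $(n-1)\mid m$) is central, which is immediate from $\delta^n=\epsilon^{n-1}=\Delta^2$ and the fact that $\Delta^2$ generates the center of $B_n$. Everything else is bookkeeping with fixed points of cyclic permutations.
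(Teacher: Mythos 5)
Your proof is correct and is exactly the argument the paper intends: the paper states this lemma without proof, immediately after citing the Brouwer--Ker\'ekj\'art\'o--Eilenberg theorem and describing $\delta$ and $\epsilon$ as rigid rotations, and your fixed-point count for the induced permutations (an $n$-cycle for $\delta$, a fixed point plus an $(n-1)$-cycle for $\epsilon$) together with the centrality of $\delta^n=\epsilon^{n-1}=\Delta^2$ is precisely the missing bookkeeping. Nothing to correct.
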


\begin{corollary}\label{cor:per}
Let $\alpha$ be a periodic $n$-braid whose first strand is pure.
\begin{itemize}
\item[(i)]
If\/ $\alpha$ has at least two pure strands,
then $\alpha$ is central.
\item[(ii)]
If\/ $\alpha$ is 1-unlinked, then $\alpha$ is the identity.
\end{itemize}
\end{corollary}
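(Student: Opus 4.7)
The plan is to read off both parts directly from Lemma~\ref{lem:per}, which classifies periodic braids up to conjugacy.

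For part (i), I would argue by contrapositive. If $\alpha$ is non-central and periodic, Lemma~\ref{lem:per} forces $\alpha$ to be conjugate either to $\delta^m$ (with $m\not\equiv 0\pmod n$), in which case $\alpha$ has no pure strand, or to $\epsilon^m$ (with $m\not\equiv 0\pmod{n-1}$), in which case $\alpha$ has exactly one. So $\alpha$ has at most one pure strand, which is the contrapositive of (i).

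For part (ii), I would split on whether $\alpha$ is central. In the central case $\alpha=\Delta^{2k}$; using that the full twist $\Delta^2$ contributes linking number one between strand $1$ and each of the other $n-1$ strands, one gets $\lk(\Delta^{2k})=k(n-1)$, so the hypothesis $\lk(\alpha)=0$ together with $n\ge 2$ forces $k=0$ and hence $\alpha=1$ (the case $n=1$ being trivial). In the non-central case, $1$-purity of $\alpha$ rules out Lemma~\ref{lem:per}(i), so $\alpha=\gamma\epsilon^m\gamma^{-1}$ with $m\not\equiv 0\pmod{n-1}$. A direct check from $\epsilon=\delta\sigma_1=\sigma_{n-1}\sigma_{n-2}\cdots\sigma_2\sigma_1^2$ shows that $\pi_\epsilon$ fixes $1$ and cyclically permutes $\{2,\dots,n\}$, so strand $1$ is the only pure strand of $\epsilon^m$. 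By Lemma~\ref{lem:per}(ii) together with $1$-purity, strand $1$ is also the unique pure strand of $\alpha$, so $\pi_\gamma$ must fix $1$, giving $\gamma\in B_{n,1}$. The conjugacy invariance of $\lk$ on $B_{n,1}$ now yields $\lk(\alpha)=\lk(\epsilon^m)=m\cdot\lk(\epsilon)=m$, contradicting $\lk(\alpha)=0$ since $m\ne 0$.

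The step that will require the most care is pinning down the unique pure strand of $\epsilon^m$ as strand $1$, which is precisely what forces the conjugating element into $B_{n,1}$ where $\lk$ is a conjugacy invariant; once that is in hand, the whole argument reduces to the two elementary linking-number computations $\lk(\epsilon)=1$ and $\lk(\Delta^2)=n-1$.
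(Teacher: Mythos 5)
Your proof is correct and follows the same route as the paper: part (i) is read off directly from Lemma~\ref{lem:per}, and part (ii) reduces to the computation $\lk(\epsilon^m)=m\ne 0$. The only difference is that you explicitly justify why the conjugating element carrying $\alpha$ to $\epsilon^m$ lies in $B_{n,1}$ (so that $\lk$ is actually preserved) and treat the central case separately; the paper's one-line proof of (ii) leaves this point implicit, supplying essentially your argument only later, in the proof of Lemma~\ref{lem:per2}.
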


\begin{proof}
(i)\ \ It is immediate from Lemma~\ref{lem:per}.

(ii)\ \ Let $\alpha$ be 1-unlinked and periodic.
Because $\alpha$ is 1-pure, it is conjugate to $\epsilon^m$.
Because $\lk(\epsilon)=1$ and $\alpha$ is 1-unlinked,
$0=\lk(\alpha)=\lk(\epsilon^m)=m\lk(\epsilon)=m$, hence
$\alpha$ is the identity.
\end{proof}

\subsection{Reducible braids}
\begin{definition}
A curve system $\C$ in $D_n$ means a finite collection
of disjoint simple closed curves in $D_n$.
It is said to be \emph{essential}\/
if each component is homotopic neither to a point
nor to a puncture nor to the boundary.
It is said to be \emph{unnested} if none of its components
encloses another component as in Figure~\ref{fig:standard}~(b).
\end{definition}

\begin{definition}
The $n$-braid group $B_n$ acts on the set of curve systems in $D_n$.
Let $\alpha*\C$ denote the left action of $\alpha\in B_n$
on the curve system $\C$ in $D_n$.
An $n$-braid $\alpha$ is said to be \emph{reducible}
if $\alpha*\C=\C$ for some essential curve system $\C$ in $D_n$.
Such a curve system $\C$ is called a \emph{reduction system} of $\alpha$.
\end{definition}

\def\temp{
If $\C$ is an essential curve system in $D_n$, then there could be punctures of $D_n$
not enclosed by any circle in $\C$.
In order to simplify the notations, define the curve system $\overline{\C}$
to contain exactly the curves of $\C$, plus one circle around each such puncture
of $D_n$.
}

\subsubsection{Canonical reduction system}
For a reduction system $\C$ of an $n$-braid $\alpha$,
let $D_\C$ be the closure of $D_n\setminus N(\C)$ in
$D_n$, where $N(\C)$ is a regular neighborhood of $\C$. The
restriction of $\alpha$ induces an automorphism on $D_\C$ that is
well defined up to isotopy. Due to Birman, Lubotzky and
McCarthy~\cite{BLM83} and Ivanov~\cite{Iva92},
for any $n$-braid $\alpha$,
there is a unique
\emph{canonical reduction system} $\R(\alpha)$ with the following
properties.
\begin{enumerate}
\item[(i)]
$\R(\alpha^m)=\R(\alpha)$ for all $m\ne 0$.

\item[(ii)]
$\R(\beta\alpha\beta^{-1})=\beta*\R(\alpha)$ for all $\beta\in B_n$.

\item[(iii)]
The restriction of $\alpha$ to each component of $D_{\R(\alpha)}$ is
either periodic or pseudo-Anosov. A reduction system with this
property is said to be \emph{adequate}.

\item[(iv)]
If $\C$ is an adequate reduction system of $\alpha$,
then $\R(\alpha)\subset\C$.
\end{enumerate}

By the properties of canonical reduction systems,
a braid $\alpha$ is reducible and non-periodic if and only if
$\R(\alpha)\ne\emptyset$.
Let $\Rext(\alpha)$ denote the collection
of the outermost components of $\R(\alpha)$.
Then $\Rext(\alpha)$ is an unnested curve system satisfying
the properties (i) and (ii).

\subsubsection{Standard reduction system}

In this paper we use a notation, introduced in~\cite{LL08},
for reducible braids with standard reduction system.

\begin{definition}
An essential curve system in $D_n$ is said to be \emph{standard}\/
if each component is isotopic to a round circle
centered at the real axis as in Figure~\ref{fig:standard}~(a).
\end{definition}

The unnested standard curve systems in $D_n$ are
in one-to-one correspondence
with the $r$-compositions of $n$ for $2\le r\le n-1$.
Recall that an ordered $r$-tuple $\n=(n_1,\ldots,n_r)$
is an \emph{$r$-composition} of $n$ if
$n_i\ge 1$ for each $i$ and $n=n_1+\cdots+n_r$.

\begin{definition}\label{def:CurSys}
For a composition $\n=(n_1,\ldots,n_r)$ of $n$,
let $\C_\n$ denote the unnested standard curve system $\cup_{n_i\ge 2}C_i$,
where each $C_i$ is a round circle, centered at the real line,
enclosing the punctures
$\{m\mid \sum_{j=1}^{i-1}n_j< m\le \sum_{j=1}^{i}n_j\}$.
For example, Figure~\ref{fig:standard}~(b)
shows the unnested standard curve system
$\C_\n$ for  $\n=(1,1,2,1,2,3)$.
\end{definition}

\begin{figure}
\begin{tabular}{ccc}
\includegraphics[scale=.6]{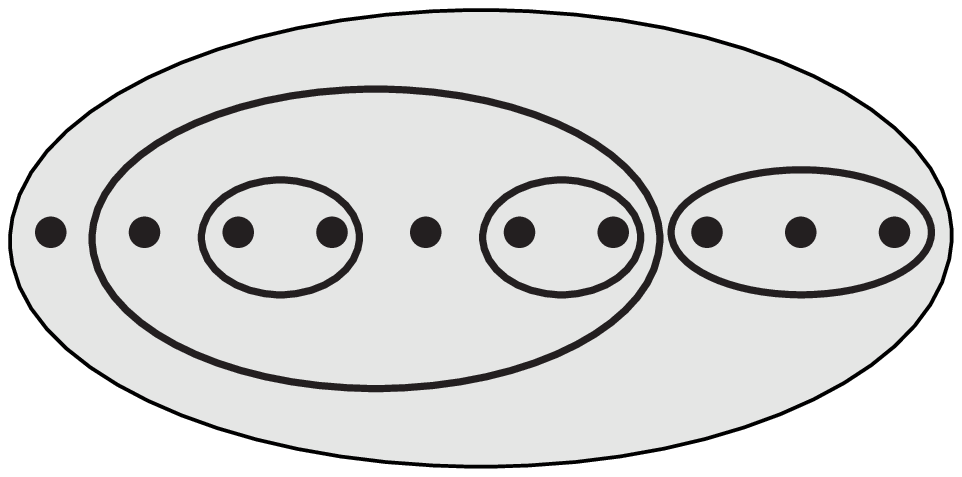}
&& \includegraphics[scale=.6]{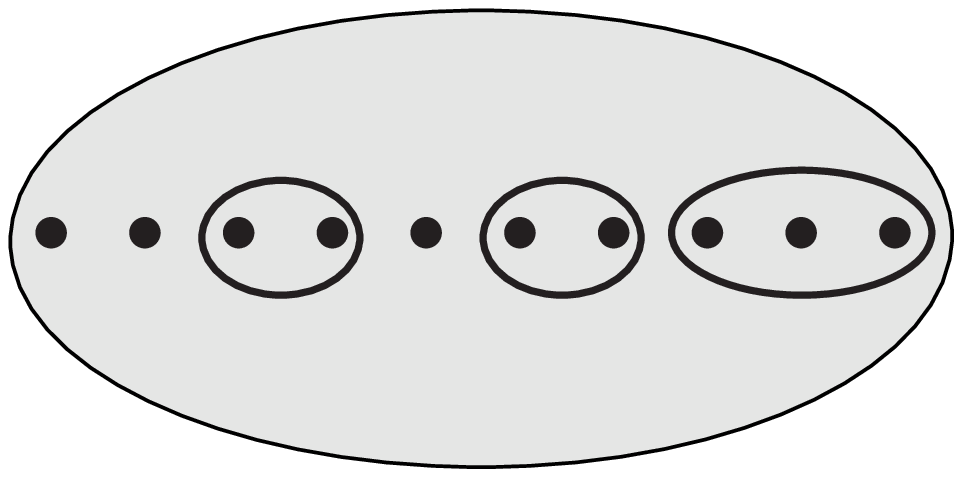}\\
(a) &\mbox{}\quad\mbox{}& (b)
\end{tabular}
\vskip -2mm
\caption{(a) shows a standard curve system in $D_{10}$.
(b) shows the unnested standard curve system
$\C_\n$ for  $\n=(1,1,2,1,2,3)$}
\label{fig:standard}
\end{figure}

The $r$-braid group $B_r$ acts on the set of $r$-compositions
of $n$ via the induced permutations: for an $r$-composition
$\n=(n_1,\cdots,n_r)$ of $n$ and
$\alpha\in B_r$ with induced permutation $\theta$,
$\alpha*\n=(n_{\theta^{-1}(1)},\ldots,n_{\theta^{-1}(r)})$.

\begin{remark}
Throughout this paper, braids and permutations
act on the left.
That is, if $\alpha$ and $\beta$ are $n$-braids,
then $(\alpha\beta)*\C = \alpha*(\beta*\C)$ for a curve
system $\C$ in $D_n$;
if $\alpha$ and $\beta$ are $r$-braids,
then $(\alpha\beta)*\n=\alpha*(\beta*\n)$
for an $r$-composition $\n$ of $n$;
if $\pi_1$ and $\pi_2$ are $n$-permutations, then
$(\pi_1\circ\pi_2)(i)=\pi_1(\pi_2(i))$ for $1\le i\le n$.
\end{remark}

\begin{definition}
Let $\n=(n_1,\cdots,n_r)$ be a composition of $n$.
\begin{itemize}
\item
Let $\alpha_0=l_1\cup\cdots\cup l_r$ be an $r$-braid
with $l_i\cap(D^2\times\{1\})=\{(i, 1)\}$ for each $i$.
We define $\myangle{\alpha_0}_\n$ as the $n$-braid obtained
from $\alpha_0$ by taking $n_i$ parallel copies of $l_i$ for each $i$.
See Figure~\ref{fig:copy}~(a).

\item
Let $\alpha_i\in B_{n_i}$ for $i=1,\ldots,r$.
We define $(\alpha_1\oplus\cdots\oplus\alpha_r)_\n$ as the $n$-braid
$\alpha_1'\alpha_2'\cdots\alpha_r'$, where each $\alpha_i'$
is the image of $\alpha_i$ under the homomorphism
$B_{n_i}\to B_n$ defined by $\sigma_j\mapsto\sigma_{n_1+\cdots+n_{i-1}+j}$.
See Figure~\ref{fig:copy}~(b).
\end{itemize}
\end{definition}

We will use the notation
$\alpha=\myangle{\alpha_0}_\n(\alpha_1\oplus\cdots\oplus\alpha_r)_\n$
throughout the paper.
See Figure~\ref{fig:copy}~(c).
The following lemma shows some elementary properties.

\begin{figure}
\begin{tabular}{ccccc}
\includegraphics[scale=1]{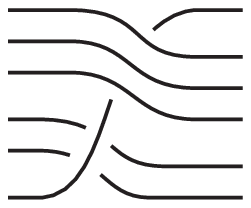} &\qquad&
\includegraphics[scale=1]{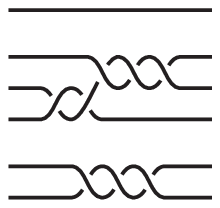} &\qquad&
\includegraphics[scale=1]{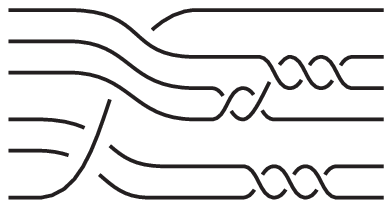} \\
\small (a) $\myangle{\sigma_1^{-1}\sigma_2}_\n$&&
\small (b) $(\sigma_1^3\oplus\sigma_1^{-2}\sigma_2^3\oplus 1)_\n$&&
\small (c) $\myangle{\sigma_1^{-1}
\sigma_2}_\n(\sigma_1^3\oplus\sigma_1^{-2}\sigma_2^3\oplus 1)_\n$
\end{tabular}
\caption{$\n=(2,3,1)$}\label{fig:copy}
\end{figure}

\begin{lemma}[{\cite[Lemmas 3.5 and 3.6]{LL08}}]\label{thm:decom}
Let\/ $\n=(n_1,\ldots,n_r)$ be a composition of\/ $n$.
\begin{enumerate}
\item[(i)]
The expression $\alpha=\myangle{\alpha_0}_\n(\alpha_1\oplus\cdots\oplus\alpha_r)_\n$
is unique, i.e. if\/
$\myangle{\alpha_0}_\n(\alpha_1\oplus\cdots\oplus\alpha_r)_\n
=\myangle{\beta_0}_\n(\beta_1\oplus\cdots\oplus\beta_r)_\n$,
then $\alpha_i=\beta_i$ for $i=0, 1,\ldots,r$.

\item[(ii)]
If $\alpha=\myangle{\alpha_0}_\n(\alpha_1\oplus\cdots\oplus\alpha_r)_\n$,
then $\alpha*\C_\n$ is standard and, further, $\alpha*\C_\n=\C_{\alpha_0\ast\n}$.
Conversely, if\/ $\alpha*\C_\n$ is standard, then $\alpha$ can be expressed as
$\alpha=\myangle{\alpha_0}_\n(\alpha_1\oplus\cdots\oplus\alpha_r)_\n$.

\item[(iii)]
$\myangle{\alpha_0}_\n(\alpha_1\oplus\cdots\oplus\alpha_r)_\n
= (\alpha_{\theta^{-1}(1)}\oplus\cdots\oplus\alpha_{\theta^{-1}(r)})_{\alpha_0\ast\n}
\myangle{\alpha_0}_\n$,
where $\theta$ is the induced permutation of\/ $\alpha_0$.

\item[(iv)]
$\myangle{\alpha_0 \beta_0}_\n
=\myangle{\alpha_0}_{\beta_0*\n}\myangle{\beta_0}_\n$.

\item[(v)]
$(\myangle{\alpha_0}_\n)^{-1}=\myangle{\alpha_0^{-1}}_{\alpha_0*\n}$.

\item[(vi)]
$
(\alpha_1\beta_1\oplus\cdots\oplus\alpha_r\beta_r)_\n
=(\alpha_1\oplus\cdots\oplus\alpha_r)_\n
(\beta_1\oplus\cdots\oplus\beta_r)_\n
$

\item[(vii)]
$(\alpha_1\oplus\cdots\oplus\alpha_r)_\n^{-1}
=(\alpha_1^{-1}\oplus\cdots\oplus\alpha_r^{-1})_\n$.

\item[(viii)]
Let $\alpha=\myangle{\alpha_0}_\n(\alpha_1\oplus\cdots\oplus\alpha_r)_\n$.
Then $\alpha = \Delta_{(n)}$ if and only if\/
$\alpha_0=\Delta_{(r)}$ and $\alpha_i=\Delta_{(n_i)}$ for $1\le i\le r$.
\end{enumerate}
\end{lemma}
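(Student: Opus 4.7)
The plan is to interpret the two building blocks geometrically: $\myangle{\alpha_0}_\n$ is the ``parallelization'' of the $r$-braid $\alpha_0$, in which the $i$-th strand is replaced by $n_i$ parallel copies running through the $i$-th tube of $\C_\n$; while $(\alpha_1\oplus\cdots\oplus\alpha_r)_\n$ acts as $\alpha_i$ inside the $i$-th tube and trivially outside. From this picture, parts (iv)--(vii) are formal: parallelization is a homomorphism provided one tracks which tubes the strands currently occupy (this accounts for the subscript $\beta_0*\n$ in (iv)), and $\oplus$ is a product of pairwise commuting subgroups of $B_n$ whose supports are disjoint subdisks.

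For (ii), the forward direction is direct: $(\alpha_1\oplus\cdots\oplus\alpha_r)_\n$ preserves each component of $\C_\n$ setwise, and $\myangle{\alpha_0}_\n$ rigidly permutes these tubes to produce the standard curve system $\C_{\alpha_0*\n}$. For the converse, suppose $\alpha*\C_\n$ is standard; define an $r$-braid $\alpha_0$ by collapsing each tube to a point and recording the resulting motion. Then $(\myangle{\alpha_0}_\n)^{-1}\alpha$ setwise preserves every tube, and restricting it to the interior of the $i$-th tube defines $\alpha_i\in B_{n_i}$, giving the desired decomposition.

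The commutation identity (iii) follows by pushing $(\alpha_1\oplus\cdots\oplus\alpha_r)_\n$ through the tube permutation $\myangle{\alpha_0}_\n$: since $\myangle{\alpha_0}_\n$ carries the $i$-th tube to the $\theta(i)$-th tube, the inner braid previously in the $i$-th tube now lives in the $\theta(i)$-th tube, yielding the re-indexing by $\theta^{-1}$. Uniqueness (i) then follows from (ii) together with the extraction used in its proof: the data $\alpha_0*\n$ is recovered from $\alpha*\C_\n$, and each $\alpha_i$ is recovered by restricting to the $i$-th tube; equivalently, if two decompositions agree, then deleting all strands outside the $i$-th tube forces $\alpha_i=\beta_i$, after which (iv) and (v) yield $\alpha_0=\beta_0$. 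Part (viii) is the observation that the half-twist $\Delta_{(n)}$ restricts to $\Delta_{(n_i)}$ inside each tube and descends to $\Delta_{(r)}$ on the collapsed tubes, so uniqueness forces these factors.

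The main obstacle is making the converse in (ii) rigorous: one must verify that after isotoping $\alpha$ to send the tubes of $\C_\n$ to tubes of $\C_{\alpha_0*\n}$, the induced mapping classes on the tubes and on the collapsed $r$-holed disk are well-defined and independent of the chosen isotopy. This reduces to the standard fact that the stabilizer of $\C_\n$ in $B_n$ splits as a semidirect product of the tube-level braid group $B_r$ with $\prod_i B_{n_i}$, but must be invoked carefully so that uniqueness (i) is not used circularly in proving (ii).
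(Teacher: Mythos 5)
The paper offers no proof of this lemma: it is imported verbatim from \cite[Lemmas 3.5 and 3.6]{LL08}, so there is no in-paper argument to compare yours against. Your geometric reading of the two constructions (cabling/parallelization for $\myangle{\alpha_0}_\n$, disjointly supported interior braids for $(\alpha_1\oplus\cdots\oplus\alpha_r)_\n$) is the standard one, and the derivations of (iii)--(vii) from it are correct, including the bookkeeping of the subscript $\beta_0*\n$ in (iv). Your proof of (i) is also essentially right: deleting all strands outside the $i$-th tube kills the cabled tubular strand (a single strand is a trivial $1$-braid, so its $n_i$ parallel copies form a trivial $n_i$-braid) and returns $\alpha_i$, whence $\alpha_i=\beta_i$ for $i\ge 1$ and $\myangle{\alpha_0}_\n=\myangle{\beta_0}_\n$. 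Two points deserve to be made explicit rather than waved at. First, concluding $\alpha_0=\beta_0$ from $\myangle{\alpha_0}_\n=\myangle{\beta_0}_\n$ is not delivered by (iv) and (v) alone; those reduce it to the injectivity of the cabling map $\alpha_0\mapsto\myangle{\alpha_0}_\n$, which you should justify (e.g.\ collapsing each tube to a puncture is a left inverse, i.e.\ the tubular-braid map retracts onto $B_r$). Second, the forward half of (viii) rests on the specific cabling identity $\Delta_{(n)}=\myangle{\Delta_{(r)}}_\n(\Delta_{(n_1)}\oplus\cdots\oplus\Delta_{(n_r)})_\n$; ``restricts to $\Delta_{(n_i)}$ and descends to $\Delta_{(r)}$'' is the right picture (realize $\Delta$ as a rigid $\pi$-rotation composed with internal half-twists of the tubes), but it is exactly the identity being proved, so it should be checked, not just asserted, after which uniqueness from (i) gives the ``only if'' direction and direct computation gives the ``if''.

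You correctly isolate the one substantive issue, the converse in (ii): that an $\alpha$ with $\alpha*\C_\n$ standard splits as a tubular braid times interior braids. This is the statement that the map from the stabilizer-up-to-standard-position of $\C_\n$ onto $B_r$ (collapse tubes) has kernel exactly $\prod_i B_{n_i}$ and splits via cabling; it is standard (it is essentially why the lemma is quoted from \cite{LL08} rather than proved here), and your use of it is not circular with (i), since (i) is derived afterwards by the strand-deletion and tube-collapsing retractions rather than from the existence statement. With the two explicit justifications above added, the argument is complete.
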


\def\temp{
For an $n$-braid $\alpha$ and a composition $\n=(n_1,\ldots,n_r)$ of $n$,
if $\alpha*\C_\n$ is standard, then there exists a unique expression
$\alpha=\myangle{\alpha_0}_\n(\alpha_1\oplus\cdots\oplus\alpha_r)_\n$
by the above lemma.
We denote $\alpha_0$ by $\Ext_{\n}(\alpha)$.
For another $n$-braid $\beta$,
if $\alpha*\C_\n=\beta*\C_\n=\C_\n$, then
$\Ext_{\n}(\alpha\beta)=\Ext_{\n}(\alpha)\Ext_{\n}(\beta)$.
}

\subsection{Basic properties of $P$-pure, $P$-straight or 1-unlinked braids}

\begin{lemma}\label{lem:gamma}
Let $\alpha$, $\beta$ and $\gamma$ be $n$-braids,
and let $P$ be a subset of\/ $\{1,\ldots,n\}$.
\begin{itemize}
\item[(i)]
If\/ $\alpha$ is $P$-pure,
then $\gamma\alpha\gamma^{-1}$ is $\pi_\gamma(P)$-pure.

\item[(ii)]
If\/ $\alpha$ is $P$-straight,
then $\gamma\alpha\gamma^{-1}$ is $\pi_\gamma(P)$-straight.

\item[(iii)]
If\/ $\alpha$ is 1-unlinked and $\gamma$ is 1-pure,
then $\gamma\alpha\gamma^{-1}$ is 1-unlinked.

\item[(iv)]
If\/ both $\alpha$ and $\beta$ are
$P$-pure (resp.{} $P$-straight, 1-unlinked),
then $\alpha^p\beta^q$ is $P$-pure (resp.{} $P$-straight, 1-unlinked)
for any integers $p$ and $q$.
\end{itemize}
\end{lemma}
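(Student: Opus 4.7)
\smallskip

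The plan is to handle (i), (iii), and (iv) by direct structural arguments, and to reduce (ii) to a naturality property of strand deletion.

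For (i), recall that $\pi_{\gamma\alpha\gamma^{-1}}=\pi_\gamma\pi_\alpha\pi_\gamma^{-1}$. Given $i\in\pi_\gamma(P)$, write $i=\pi_\gamma(j)$ with $j\in P$; then using $\pi_\alpha(j)=j$ one computes $\pi_{\gamma\alpha\gamma^{-1}}(i)=\pi_\gamma(\pi_\alpha(j))=\pi_\gamma(j)=i$, so $\gamma\alpha\gamma^{-1}$ is $\pi_\gamma(P)$-pure. Specializing to $P=\{1\}$ and using $\pi_\gamma(1)=1$ gives the $1$-pureness needed in (iii); combined with the fact, noted earlier in the paper, that $\lk$ is a conjugacy invariant on $B_{n,1}$, this yields $\lk(\gamma\alpha\gamma^{-1})=\lk(\alpha)=0$, finishing (iii).

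The crux is (ii). The plan is to invoke the forgetful homomorphism
\[
\phi_P\colon B_{n,P}\longrightarrow B_{|P|}
\]
obtained by filling in all punctures of $D_n$ whose indices are not in $P$; this is a well-defined group homomorphism because filling in punctures commutes with composition of mapping classes, and a $P$-pure braid is $P$-straight exactly when it lies in $\ker\phi_P$. Although $\gamma$ itself need not belong to $B_{n,P}$, as a mapping class of $D_n$ it takes the punctures outside $P$ bijectively onto those outside $Q:=\pi_\gamma(P)$, so after filling in these punctures it descends to a homeomorphism between the two resulting $|P|$-punctured disks. Fixing a standard identification of both with the model $|P|$-punctured disk produces an element $\gamma_0\in B_{|P|}$ with
\[
\phi_Q(\gamma\alpha\gamma^{-1})=\gamma_0\,\phi_P(\alpha)\,\gamma_0^{-1}.
\]
Since $\alpha$ is $P$-straight, $\phi_P(\alpha)=1$, so $\phi_Q(\gamma\alpha\gamma^{-1})=1$ and $\gamma\alpha\gamma^{-1}$ is $Q$-straight.

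Finally, (iv) follows because in each case the braids in question form a subgroup of $B_n$: the $P$-pure braids are the preimage of the pointwise stabilizer of $P$ under $B_n\to S_n$; the $P$-straight braids are $\ker\phi_P$; and the $1$-unlinked braids are $\ker\lk\subset B_{n,1}$. Hence each set is closed under products and inverses, so $\alpha^p\beta^q$ lies in it whenever $\alpha$ and $\beta$ do. The main obstacle is to set up the geometric naturality used in (ii) without ambiguity; once $\phi_P$ is recognized as a puncture-filling functor on mapping classes, everything else is routine bookkeeping.
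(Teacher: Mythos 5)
Your proof is correct and follows essentially the same route as the paper, which simply declares (i), (ii) and the first part of (iv) obvious and handles the linking-number statements by the additivity/conjugacy-invariance of $\lk$ exactly as you do. Your formalization of strand deletion as a puncture-filling homomorphism $\phi_P$ is just a careful spelling-out of the naturality the paper takes for granted, not a different argument.
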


\begin{proof}
(i) and (ii) are obvious.

(iii)\ \ It follows from
$\lk(\gamma\alpha\gamma^{-1})=\lk(\gamma)+\lk(\alpha)-\lk(\gamma)$.

(iv)\ \ It is obvious for $P$-pureness and $P$-straightness.
The 1-unlinkedness follows from
$\lk(\alpha^p\beta^q)=p\lk(\alpha)+q\lk(\beta)=0$.
\end{proof}

\begin{lemma}\label{lem:per2}
If\/ $\alpha\in B_{n,1}$ is a periodic braid,
then there exists a 1-unlinked $n$-braid $\gamma$ such that
$\gamma\alpha\gamma^{-1}=\epsilon^m$ for some integer $m$.
\end{lemma}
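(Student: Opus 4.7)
The plan is to apply Lemma~\ref{lem:per} to bring $\alpha$ into the canonical form $\epsilon^m$ up to conjugacy, and then to adjust the conjugator twice: first to make it $1$-pure by matching fixed points of the induced permutations, and second to make it $1$-unlinked by multiplying by a suitable power of $\epsilon$ itself.

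For the first step, since $\alpha$ is periodic and has a pure strand (namely strand $1$), Lemma~\ref{lem:per}(i) rules out the possibility of $\alpha$ being conjugate to $\delta^m$ with $m\not\equiv 0\pmod n$. Hence there exist $\eta\in B_n$ and $m\in\mathbb Z$ with $\alpha=\eta\epsilon^m\eta^{-1}$. I would then argue that $\eta$ may be chosen $1$-pure: if $m\equiv 0\pmod{n-1}$, then $\epsilon^m$ is central, so $\alpha=\epsilon^m$ and $\eta=1$ works; otherwise, by Lemma~\ref{lem:per}(ii) the braid $\epsilon^m$ has exactly one pure strand, which is strand $1$ (the fixed central puncture of the rotation $\epsilon$, as one sees from a direct computation of $\pi_\epsilon$). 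Since conjugation carries the set of pure strands of $\epsilon^m$ to that of $\alpha$ via $\pi_\eta$, the assumption that $\alpha$ is $1$-pure forces $\pi_\eta(1)=1$. Setting $\gamma_0=\eta^{-1}$ gives a $1$-pure braid with $\gamma_0\alpha\gamma_0^{-1}=\epsilon^m$.

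For the second step, I would kill the linking number of $\gamma_0$ by multiplying by a power of $\epsilon$. The relevant features are that $\epsilon$ is $1$-pure, commutes with every $\epsilon^m$, and satisfies $\lk(\epsilon)=1$; the last fact is immediate from the expression $\epsilon=\sigma_{n-1}\sigma_{n-2}\cdots\sigma_2\sigma_1^2$, which is already a word in the generators $\sigma_1^2,\sigma_2,\ldots,\sigma_{n-1}$ of $B_{n,1}$ with exactly one $\sigma_1^2$. Put $\gamma:=\epsilon^{-\lk(\gamma_0)}\gamma_0$. Then $\gamma\alpha\gamma^{-1}=\epsilon^{-\lk(\gamma_0)}\epsilon^m\epsilon^{\lk(\gamma_0)}=\epsilon^m$, and $\gamma$ is a product of $1$-pure braids (hence $1$-pure) with $\lk(\gamma)=-\lk(\gamma_0)+\lk(\gamma_0)=0$, so $\gamma$ is $1$-unlinked.

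The whole argument is a short assembly of Lemma~\ref{lem:per}, the observation that pure strands are permuted under conjugation via $\pi_{\text{conjugator}}$, and the use of $\epsilon$ as a ``unit'' of linking number one commuting with every $\epsilon^m$. No step is a serious obstacle; the only mildly subtle point is the fixed-point matching that forces $\eta$ to be $1$-pure, and this is automatic once one observes that a non-central $\epsilon^m$ has exactly one pure strand.
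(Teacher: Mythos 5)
Your proposal is correct and follows essentially the same route as the paper's proof: dispose of the central case, use Lemma~\ref{lem:per} and the fact that a non-central power of $\epsilon$ has the first strand as its only pure strand to force the conjugator to be $1$-pure, and then multiply by $\epsilon^{-\lk(\gamma_0)}$ (which commutes with $\epsilon^m$ and has linking number $-\lk(\gamma_0)$) to make it $1$-unlinked. The only cosmetic difference is that you first invoke Lemma~\ref{lem:per}(i) to exclude the $\delta$-case explicitly and spell out $\lk(\epsilon)=1$ via $\epsilon=\sigma_{n-1}\cdots\sigma_2\sigma_1^2$, both of which the paper leaves implicit.
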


\begin{proof}
If $\alpha$ is central, then we can take the identity
as the conjugating element $\gamma$.
Therefore we may assume that $\alpha$ is non-central,
hence $\alpha$ is conjugate to $\epsilon^m$
for some $m\not\equiv 0\bmod n-1$.
There exists an $n$-braid $\gamma_1$
such that $\gamma_1\alpha\gamma_1^{-1}=\epsilon^m$.
Because $\gamma_1\alpha\gamma_1^{-1}$ is $\pi_{\gamma_1}(1)$-pure
and $\epsilon^m$ has the first strand as the only pure strand,
we have $\pi_{\gamma_1}(1)=1$, that is, $\gamma_1$ is 1-pure.
Let $q=\lk(\gamma_1)$ and $\gamma=\epsilon^{-q}\gamma_1$.
Then
$$
\gamma\alpha\gamma^{-1}
=\epsilon^{-q}(\gamma_1\alpha\gamma_1^{-1})\epsilon^q
=\epsilon^{-q}\epsilon^m\epsilon^q
=\epsilon^m.
$$
Since $\gamma_1$ and $\epsilon$ are 1-pure, so is $\gamma$.
Since $\lk(\epsilon)=1$, we have
$$
\lk(\gamma)
=\lk(\gamma_1)+\lk(\epsilon^{-q})
=\lk(\gamma_1)-q=0.
$$
Therefore $\gamma$ is a conjugating element
from $\alpha$ to $\epsilon^m$,
which is 1-unlinked.
\end{proof}

\begin{figure}
\begin{tabular}{ccc}
\includegraphics[scale=.9]{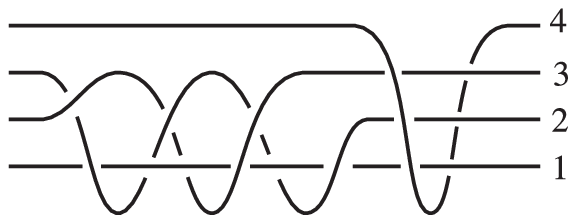} &\mbox{}\qquad\mbox{} &
\includegraphics[scale=.9]{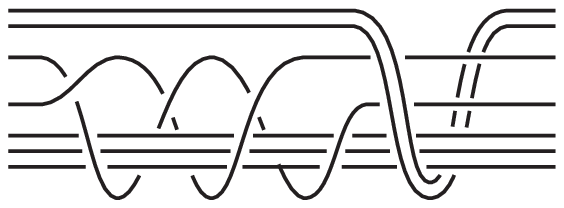} \\
(a) $\alpha=\sigma_2^{-1}\sigma_1^2\sigma_2^{-1}\sigma_1^{-2}\sigma_2^{-1}
\sigma_1^{-2}\sigma_3\sigma_2\sigma_1^2\sigma_2\sigma_3$ &&
(b) $\beta=\myangle{\alpha}_\n$ for $\n=(3,1,1,2)$
\end{tabular}
\caption{
For the above 4-braid $\alpha$, we have
$\lk_2(\alpha)=0$,
$\lk_3(\alpha)=-1$,
$\lk_4(\alpha)=1$,
hence
$\lk(\alpha)=0+(-1)+1=0$.
For the above 7-braid $\beta$, we have
$\lk_2(\beta)=\lk_3(\alpha)=0$,
$\lk_4(\beta)=0$,
$\lk_5(\beta)=-1$,
$\lk_6(\beta)=\lk_7(\alpha)=1$,
hence $\lk(\beta)=1$.}
\label{fig:lk}
\end{figure}

\begin{definition}
For a braid $\alpha\in B_{n,1}$ and an integer $2\le i\le n$,
we define the \emph{$i$-th linking number} $\lk_i(\alpha)$ of $\alpha$ as
the linking number between the first and the $i$-th strands of $\alpha$.
See Figure~\ref{fig:lk}.
\end{definition}

The following is an obvious relation between the linking number
and the $i$-th linking number.

\begin{lemma}\label{lem:LinkingNo}
Let $\alpha=\myangle{\alpha_0}_\n
(\alpha_1\oplus\alpha_2\oplus\cdots\oplus\alpha_r)_\n \in B_{n,1}$
for a composition $\n=(n_1,\ldots,n_r)$ of\/ $n$.
Then
$\lk(\alpha)=\lk(\alpha_1)
+\sum_{i=2}^r n_i\lk_i(\alpha_0)$.
\end{lemma}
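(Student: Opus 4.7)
The plan is to use additivity of the homomorphism $\lk$ applied to the factorisation
\[
\alpha = \myangle{\alpha_0}_\n\,(\alpha_1\oplus\cdots\oplus\alpha_r)_\n,
\]
computing $\lk$ on each factor separately. A preliminary step is to check that both factors lie in $B_{n,1}$. Since $\alpha$ is 1-pure, the first strand never leaves the first block, forcing $\alpha_0\in B_{r,1}$; once inside the first block it returns to position~$1$, forcing $\alpha_1\in B_{n_1,1}$. In particular the quantities $\lk_i(\alpha_0)$ and $\lk(\alpha_1)$ appearing in the statement are well defined.

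For the direct-sum factor, writing $(\alpha_1\oplus\cdots\oplus\alpha_r)_\n = \alpha_1'\cdots\alpha_r'$ as in the definition, each $\alpha_i'$ with $i\ge 2$ is a word in generators $\sigma_j$ with $j\ge n_1+1$, so $\lk(\alpha_i')=0$. The remaining factor $\alpha_1'$ is the image of $\alpha_1$ under the inclusion $B_{n_1}\hookrightarrow B_n$ sending $\sigma_j$ to $\sigma_j$, which intertwines the two copies of $\lk$, so $\lk(\alpha_1')=\lk(\alpha_1)$.

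For the cabling factor, I would use the identity $\lk(\gamma)=\sum_{j=2}^n \lk_j(\gamma)$, which is immediate from the definition of $\lk$ as the exponent sum of $\sigma_1^2$. In $\myangle{\alpha_0}_\n$, the strands of block~$1$ other than strand~$1$ are parallel copies of the first strand of $\alpha_0$, so they do not link strand~$1$ and $\lk_j(\myangle{\alpha_0}_\n) = 0$ for $2\le j\le n_1$. For $j$ in block~$i$ with $i\ge 2$, strand~$j$ is a parallel copy of the $i$-th strand of $\alpha_0$, whose linking with the chosen parallel copy of strand~$1$ is $\lk_i(\alpha_0)$. Summing over the $n_i$ strands in each block gives $\lk(\myangle{\alpha_0}_\n)=\sum_{i=2}^r n_i\lk_i(\alpha_0)$, and adding the $\lk(\alpha_1)$ from the previous paragraph yields the formula.

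The one step that is not formal bookkeeping is the cabling identity for linking numbers, namely that replacing strand~$i$ of $\alpha_0$ by $n_i$ parallel copies preserves, on each individual copy, the linking with strand~$1$. I expect to justify it by a direct crossing-count: in a planar diagram of $\alpha_0$, each crossing of strands~$1$ and~$i$ is replaced by a $1\times n_i$ grid of crossings of the same sign, and no further crossings between strand~$1$ and the new parallel copies are introduced, so each of the $n_i$ copies has linking $\lk_i(\alpha_0)$ with strand~$1$.
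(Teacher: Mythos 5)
The paper gives no proof of this lemma, dismissing it as an ``obvious relation,'' so there is nothing to compare against; your argument---additivity of $\lk$ over the two factors $\myangle{\alpha_0}_\n$ and $(\alpha_1\oplus\cdots\oplus\alpha_r)_\n$ (both of which you correctly check lie in $B_{n,1}$), the identity $\lk=\sum_{j=2}^n\lk_j$, and the crossing count showing that cabling the $i$-th strand contributes $n_i\lk_i(\alpha_0)$---is correct and is surely the intended justification. The only cosmetic quibble is that cabling turns each crossing of $l_1$ with $l_i$ into an $n_1\times n_i$ grid; it becomes the $1\times n_i$ grid you describe only after restricting to the single copy of $l_1$ that serves as the first strand, which is indeed all that matters for $\lk_j$.
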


\begin{definition}
For a set $P\subset\{1, 2,\ldots,n\}$ and a composition
$\n=(n_1,\ldots,n_r)$ of $n$, define the sets
$P_{\n,0}, P_{\n,1},\ldots,P_{\n,r}$ as follows:
\begin{eqnarray*}
P_{\n,i}&=&\{ 1\le j\le n_i \mid (n_1+\cdots+n_{i-1}) + j \in P \}
\qquad\mbox{for $i=1,\ldots,r$};\\
P_{\n,0} &=&\{1\le i\le r \mid P_{\n,i}\neq \emptyset\}.
\end{eqnarray*}
\end{definition}

Note that, using the above notations,
$P=\bigcup_{i=1}^r ((n_1+\cdots+n_{i-1})+P_{\n,i})$.
The following lemma is easy.

\begin{lemma}\label{lem:indu}
Let $P\subset\{1, 2, \ldots,n\}$
and $\alpha=\myangle{\alpha_0}_\n(\alpha_1\oplus\cdots\oplus\alpha_r )_\n$
for a composition $\n$ of\/ $n$.
\begin{itemize}
\item[(i)]
$\alpha$ is $P$-pure if and only if\/
$\alpha_i$'s are $P_{\n,i}$-pure for all $i=0, 1,\ldots,r$.

\item[(ii)]
$\alpha$ is $P$-straight if and only if\/
$\alpha_i$'s are $P_{\n,i}$-straight for all $i=0, 1,\ldots,r$.

\item[(iii)]
If\/ $\alpha_1$ is 1-unlinked,
then $(\alpha_1\oplus\cdots\oplus\alpha_r)_\n$ is 1-unlinked.
\def\temp{
\item[(iv)]
If\/ $\alpha_0$ is 1-unlinked and $n_2=\cdots=n_r$,
then $\myangle{\alpha_0}_\n$ is 1-unlinked.

\item[(v)]
If\/ $\alpha_0$ and $\alpha_1$ are 1-unlinked and $n_2=\cdots=n_r$,
then $\myangle{\alpha_0}_\n(\alpha_1\oplus\cdots\oplus\alpha_r)_\n$ is 1-unlinked.
}
\end{itemize}
\end{lemma}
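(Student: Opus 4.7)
The plan is to prove (i), (ii), and (iii) by analyzing how the decomposition $\alpha=\myangle{\alpha_0}_\n(\alpha_1\oplus\cdots\oplus\alpha_r)_\n$ interacts with the induced permutation, with strand deletion, and with the linking number, respectively. For part~(i), I would compute $\pi_\alpha$ block-by-block. Writing a strand index as $j=n_1+\cdots+n_{i-1}+k$ with $k\in\{1,\ldots,n_i\}$, the factor $(\alpha_1\oplus\cdots\oplus\alpha_r)_\n$ acts on the block-internal position via $\pi_{\alpha_i}$, while $\myangle{\alpha_0}_\n$ implements the block-level permutation $\pi_{\alpha_0}$. Consequently $\pi_\alpha(j)=j$ holds if and only if $\pi_{\alpha_0}(i)=i$ and $\pi_{\alpha_i}(k)=k$. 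Running through $j\in P$ and using the definitions of $P_{\n,0}$ and $P_{\n,i}$, this unpacks exactly to $\alpha_0$ being $P_{\n,0}$-pure and each $\alpha_i$ being $P_{\n,i}$-pure (with the indices $i\notin P_{\n,0}$ imposing only the vacuous $\emptyset$-purity condition).

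For part~(ii), granted the $P$-purity supplied by~(i), let $\phi_P$ denote the strand-deletion map that removes all strands not in $P$; this is a well-defined homomorphism from the group of $P$-pure $n$-braids into $B_{|P|}$. The key geometric observation is that $\phi_P$ is compatible with the block decomposition: writing $P_{\n,0}=\{i_1<\cdots<i_s\}$ and $\bar\n=(|P_{\n,i_1}|,\ldots,|P_{\n,i_s}|)$, one has
\[
\phi_P(\alpha)=\myangle{\phi_{P_{\n,0}}(\alpha_0)}_{\bar\n}
\bigl(\phi_{P_{\n,i_1}}(\alpha_{i_1})\oplus\cdots\oplus\phi_{P_{\n,i_s}}(\alpha_{i_s})\bigr)_{\bar\n},
\]
where blocks with $P_{\n,i}=\emptyset$ are removed in their entirety (equivalent to deleting the $i$-th strand of $\alpha_0$) and blocks with $P_{\n,i}\neq\emptyset$ retain exactly $|P_{\n,i}|$ parallel copies (corresponding to $\phi_{P_{\n,i}}(\alpha_i)$ as the block-internal factor). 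By the uniqueness in Lemma~\ref{thm:decom}(i), $\phi_P(\alpha)$ is trivial if and only if each of $\phi_{P_{\n,0}}(\alpha_0)$ and the $\phi_{P_{\n,i_j}}(\alpha_{i_j})$ is trivial, which combined with~(i) yields~(ii).

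Part~(iii) follows directly from Lemma~\ref{lem:LinkingNo} applied with $\alpha_0$ equal to the identity: since $\lk_i(1)=0$ for $i\ge 2$ and $\lk(\alpha_1)=0$ by hypothesis, one obtains $\lk((\alpha_1\oplus\cdots\oplus\alpha_r)_\n)=0$, and 1-purity of $(\alpha_1\oplus\cdots\oplus\alpha_r)_\n$ is inherited from that of $\alpha_1$ because the factors $\alpha_2',\ldots,\alpha_r'$ do not involve the first strand. The main obstacle is the geometric identification in part~(ii) of the decomposition of $\phi_P(\alpha)$ in terms of those of $\alpha_0$ and the $\alpha_i$'s; once this compatibility with strand-deletion is justified, the conclusion is formal from Lemma~\ref{thm:decom}(i).
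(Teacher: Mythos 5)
The paper offers no proof of this lemma at all (it is introduced with ``The following lemma is easy''), so your write-up is filling a genuine gap rather than paralleling an argument; the outline you give---block-by-block analysis of $\pi_\alpha$ for (i), compatibility of strand deletion with the decomposition plus uniqueness from Lemma~\ref{thm:decom}(i) for (ii), and Lemma~\ref{lem:LinkingNo} with trivial tubular part for (iii)---is the natural one, and (iii) is completely correct as you state it.

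There is, however, one step that is not true at the level of generality at which you (and, to be fair, the lemma itself) state it: the biconditional ``$\pi_\alpha(j)=j$ if and only if $\pi_{\alpha_0}(i)=i$ and $\pi_{\alpha_i}(k)=k$.'' This uses tacitly that block $i$ occupies the \emph{same set of absolute positions} at both ends of the braid, which holds precisely when $\alpha_0*\n=\n$. For general $\alpha_0$ both implications can fail. Concretely, take $\n=(2,1)$ and $\alpha=\myangle{\sigma_1}_{\n}(\sigma_1\oplus 1)_{\n}\in B_3$: here $\pi_{\myangle{\sigma_1}_\n}$ is a $3$-cycle (block $\{1,2\}$ is carried to $\{2,3\}$ and puncture $3$ to $1$), so $\pi_\alpha$ is a transposition fixing exactly one point $j_0\in\{1,2\}$; with $P=\{j_0\}$ one gets $P_{\n,0}=\{1\}$, yet $\pi_{\alpha_0}=\pi_{\sigma_1}$ fixes nothing, so $\alpha$ is $P$-pure while $\alpha_0$ is not $P_{\n,0}$-pure. (A dual failure of the ``if'' direction occurs when $\theta$ fixes $i$ but permutes earlier blocks of unequal sizes, shifting the partial sums.) The same positional assumption is hidden in your formula for $\phi_P(\alpha)$ in part (ii). None of this is fatal: in every application in the paper $\C_\n=\Rext(\alpha)$ is invariant under the braids being decomposed, so $\alpha_0*\n=\n$ and each surviving block genuinely returns to its own interval, after which your argument goes through verbatim. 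But you should state this hypothesis explicitly (or restrict to $\alpha$ with $\alpha*\C_\n=\C_\n$), since the equivalence you assert is what makes the lemma true, not an automatic consequence of the decomposition.
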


\section{Uniqueness of roots up to conjugacy}

In this section, we prove Theorem~\ref{thm:main}.
Let us explain our strategy for proof.
Suppose we are given $P$-pure braids $\alpha$ and $\beta$
such that $\alpha^k=\beta^k$ for some nonzero integer $k$.
Note that $\alpha$ is either pseudo-Anosov, or periodic,
or reducible and non-periodic.
Lemma~\ref{lem:irred} deals with the case where
$\alpha$ is pseudo-Anosov or periodic.
Now, suppose $\alpha$ is reducible and non-periodic.
There are three cases:
$\alpha_\ext$ is pseudo-Anosov;
$\alpha_\ext$ is central;
$\alpha_\ext$ is periodic and non-central.
(Here $\alpha_\ext$ is a particular tubular braid of $\alpha$.
See Definition~\ref{def:ext-br}.)
If $\alpha_\ext$ is either pseudo-Anosov or central,
we may assume $\alpha_\ext=\beta_\ext$, and
this case is resolved in Lemma~\ref{lem:same-ext}.
For the case where $\alpha_\ext$ is periodic and non-central,
we construct a $P$-straight conjugating element from $\alpha$ to $\beta$,
and then modify this conjugating element in order to make it 1-unlinked.
Lemma~\ref{lem:per-ext} is useful in this modification.
In the end we give the proof of Theorem~\ref{thm:main}.
Due to the lemmas mentioned above, it suffices to
construct a $P$-straight conjugating element from $\alpha$ to $\beta$
for the case where $\alpha_\ext$ is periodic and non-central.

\medskip

From now on, we will say that
\emph{Theorem~\ref{thm:main} is true for $(\alpha,\beta,P,k)$}
if $(\alpha,\beta,P,k)$ is given as in Theorem~\ref{thm:main}
and there exists a $P$-straight, 1-unlinked braid $\gamma$
with $\beta=\gamma\alpha\gamma^{-1}$.

\begin{lemma}\label{lem:conj}
Let $(\alpha,\beta,P,k)$ be given as in Theorem~\ref{thm:main}.
\begin{itemize}
\item[(i)]
Let $\chi$ be a 1-pure $n$-braid.
If Theorem~\ref{thm:main} is true for
$(\chi\alpha\chi^{-1}, \chi\beta\chi^{-1},\pi_\chi(P),k)$,
then it is also true for $(\alpha,\beta,P,k)$.

\item[(ii)]
Let $\chi$ be a $P$-straight, 1-unlinked $n$-braid
with $\chi\beta^k=\beta^k\chi$.
If Theorem~\ref{thm:main} is true for
$(\alpha, \chi\beta\chi^{-1},P,k)$,
then it is also true for $(\alpha,\beta,P,k)$.
\end{itemize}
\end{lemma}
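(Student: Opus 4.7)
My plan is to treat both parts as routine bookkeeping arguments: in each case, I take the $P$-straight, $1$-unlinked conjugator $\gamma'$ provided by the hypothesized instance of Theorem~\ref{thm:main}, build a candidate $\gamma$ for the original pair by pre/post-composing with $\chi$ in the appropriate way, and then verify the two properties (being $P$-straight and being $1$-unlinked) survive using Lemma~\ref{lem:gamma}. The only real content is checking that the hypothesized instance of Theorem~\ref{thm:main} is actually applicable, i.e.\ that the new quadruple still has $1$ in its purity set and that the two new braids have equal $k$-th powers.

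For part (i), I would first check the input $(\chi\alpha\chi^{-1},\chi\beta\chi^{-1},\pi_\chi(P),k)$: the two braids are $\pi_\chi(P)$-pure by Lemma~\ref{lem:gamma}(i), we have $1\in\pi_\chi(P)$ because $\chi$ is $1$-pure and $1\in P$, and their $k$-th powers agree by conjugating $\alpha^k=\beta^k$. Using the hypothesis I pick a $\pi_\chi(P)$-straight, $1$-unlinked braid $\gamma'$ with $\chi\beta\chi^{-1}=\gamma'(\chi\alpha\chi^{-1})(\gamma')^{-1}$, and set $\gamma:=\chi^{-1}\gamma'\chi$. A direct substitution gives $\beta=\gamma\alpha\gamma^{-1}$. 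Applying Lemma~\ref{lem:gamma}(ii) to $\gamma=\chi^{-1}\gamma'(\chi^{-1})^{-1}$ shows $\gamma$ is $\pi_{\chi^{-1}}(\pi_\chi(P))=P$-straight, and Lemma~\ref{lem:gamma}(iii), which needs exactly the hypothesis that $\chi^{-1}$ is $1$-pure, shows $\gamma$ is $1$-unlinked.

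For part (ii), the commutation condition $\chi\beta^k=\beta^k\chi$ is there precisely to make the hypothesized instance legitimate: it yields $(\chi\beta\chi^{-1})^k=\chi\beta^k\chi^{-1}=\beta^k=\alpha^k$, and since $\chi$ is $P$-straight its induced permutation fixes $P$, so $\chi\beta\chi^{-1}$ is $P$-pure by Lemma~\ref{lem:gamma}(i). Pick the $P$-straight, $1$-unlinked $\gamma'$ given by the hypothesis, with $\chi\beta\chi^{-1}=\gamma'\alpha(\gamma')^{-1}$, and this time set $\gamma:=\chi^{-1}\gamma'$. Substitution gives $\gamma\alpha\gamma^{-1}=\chi^{-1}(\gamma'\alpha(\gamma')^{-1})\chi=\chi^{-1}(\chi\beta\chi^{-1})\chi=\beta$. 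Since $\chi$ is $P$-straight and $1$-unlinked, so is $\chi^{-1}$ (the inverse of a $P$-straight braid is $P$-straight, and $\lk(\chi^{-1})=-\lk(\chi)=0$), and then Lemma~\ref{lem:gamma}(iv) with $p=q=1$ applied to $\chi^{-1}$ and $\gamma'$ yields that $\gamma=\chi^{-1}\gamma'$ is both $P$-straight and $1$-unlinked.

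The main obstacle is purely conceptual rather than technical: one must remember that under conjugation by a $1$-pure $\chi$ the set $P$ may be transported to $\pi_\chi(P)$, which is why part (i) invokes Lemma~\ref{lem:gamma}(iii) (whose $1$-pureness hypothesis is exactly calibrated to such a conjugation), while under left multiplication by a $P$-straight, $1$-unlinked element that \emph{commutes} with $\alpha^k$, everything is preserved additively in the sense of Lemma~\ref{lem:gamma}(iv). No deep argument is needed here; the point of packaging both manipulations into a single lemma is to allow them to be used freely in the rest of the proof of Theorem~\ref{thm:main}.
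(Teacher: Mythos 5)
Your proposal is correct and follows essentially the same route as the paper: verify that the transported quadruple satisfies the hypotheses of Theorem~\ref{thm:main}, take the conjugator $\gamma'$ it provides, set $\gamma=\chi^{-1}\gamma'\chi$ in part (i) and $\gamma=\chi^{-1}\gamma'$ in part (ii), and check $P$-straightness and $1$-unlinkedness via Lemma~\ref{lem:gamma}. The only (immaterial) deviations are which clause of Lemma~\ref{lem:gamma} you cite for the $P$-pureness of $\chi\beta\chi^{-1}$ in part (ii) and the $p=q=1$ versus $p=-1,q=1$ bookkeeping at the end.
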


\begin{proof}
(i)\ \
Note that $(\chi\alpha\chi^{-1})^k=(\chi\beta\chi^{-1})^k$,
that both $\chi\alpha\chi^{-1}$ and $\chi\beta\chi^{-1}$
are $\pi_\chi(P)$-pure by Lemma~\ref{lem:gamma}~(i),
and that $1\in\pi_\chi(P)$ because $1\in P$ and $\chi$ is 1-pure.
Suppose Theorem~\ref{thm:main} is true for
$(\chi\alpha\chi^{-1}, \chi\beta\chi^{-1},\pi_\chi(P),k)$, that is,
there exists a $\pi_\chi(P)$-straight, 1-unlinked $n$-braid $\gamma_1$
with $\chi\beta\chi^{-1}=\gamma_1(\chi\alpha\chi^{-1})\gamma_1^{-1}$.
Let $\gamma=\chi^{-1}\gamma_1\chi$, then $\beta=\gamma\alpha\gamma^{-1}$.
Since $\gamma_1$ is $\pi_\chi(P)$-straight, $\gamma$ is $P$-straight
by Lemma~\ref{lem:gamma}~(ii).
Since $\gamma_1$ is 1-unlinked and $\chi$ is 1-pure,
$\gamma$ is 1-unlinked by Lemma~\ref{lem:gamma}~(iii).

\smallskip(ii)\ \
Since $\chi$ commutes with $\beta^k$,
$(\chi\beta\chi^{-1})^k=\chi\beta^k\chi^{-1}=\beta^k=\alpha^k$.
Because both $\beta$ and $\chi$  are $P$-pure,
$\chi\beta\chi^{-1}$ is $P$-pure by Lemma~\ref{lem:gamma}~(iv).
Suppose Theorem~\ref{thm:main} is true for $(\alpha, \chi\beta\chi^{-1},P,k)$,
that is, there exists a $P$-straight, 1-unlinked $n$-braid $\gamma_1$
such that $\chi\beta\chi^{-1}=\gamma_1\alpha\gamma_1^{-1}$.
Let $\gamma=\chi^{-1}\gamma_1$, then $\beta=\gamma\alpha\gamma^{-1}$.
Since both $\gamma_1$ and $\chi$ are $P$-straight and 1-unlinked,
$\gamma$ is $P$-straight and 1-unlinked by Lemma~\ref{lem:gamma}~(iv).
\end{proof}

\begin{lemma}\label{lem:irred}
Let $(\alpha,\beta,P,k)$ be given as in Theorem~\ref{thm:main}.
If\/ $\alpha$ is either pseudo-Anosov or periodic,
then Theorem~\ref{thm:main} is true for $(\alpha,\beta,P,k)$.
\end{lemma}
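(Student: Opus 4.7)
The plan is to split on the Nielsen--Thurston type of $\alpha$ and, in the periodic branch, on whether $|P|\ge 2$ or $|P|=1$.

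\emph{Pseudo-Anosov case.} By Lemma~\ref{lem:NT}(ii), $\alpha^k=\beta^k$ forces $\alpha=\beta$, so $\gamma=1$ is a valid conjugator: the identity braid is trivially $P$-straight and $1$-unlinked.

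\emph{Periodic case with $|P|\ge 2$.} Since $\alpha$ is $P$-pure and $1\in P$, the braid $\alpha$ has at least two pure strands; Corollary~\ref{cor:per}(i) therefore forces $\alpha$ to be central, say $\alpha=\Delta^{2\ell}$. By Lemma~\ref{lem:NT}(i), $\beta$ is also periodic, and the same reasoning gives $\beta=\Delta^{2\ell'}$. Because the center $\langle\Delta^{2}\rangle$ of $B_n$ is infinite cyclic (hence torsion-free) and $k\neq 0$, the equality $\Delta^{2\ell k}=\alpha^k=\beta^k=\Delta^{2\ell'k}$ forces $\ell=\ell'$, so $\alpha=\beta$ and again $\gamma=1$ works.

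\emph{Periodic case with $|P|=1$, i.e.\ $P=\{1\}$.} Here a braid is $P$-straight exactly when it is $1$-pure, so it suffices to construct a $1$-unlinked $\gamma$ with $\beta=\gamma\alpha\gamma^{-1}$. Applying Lemma~\ref{lem:per2} to $\alpha$ and $\beta$ yields $1$-unlinked braids $\chi_1,\chi_2$ with $\chi_1\alpha\chi_1^{-1}=\epsilon^{m_1}$ and $\chi_2\beta\chi_2^{-1}=\epsilon^{m_2}$ for some integers $m_1,m_2$. Substituting into $\alpha^k=\beta^k$ gives
\[
\eta\,\epsilon^{km_1}\,\eta^{-1}=\epsilon^{km_2},\qquad \eta=\chi_2\chi_1^{-1},
\]
and $\eta\in B_{n,1}$ since $\chi_1,\chi_2$ are $1$-pure. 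Because $\lk$ is a conjugacy invariant on $B_{n,1}$, this gives $km_1=\lk(\epsilon^{km_1})=\lk(\epsilon^{km_2})=km_2$, so $m_1=m_2$. Setting $\gamma=\chi_2^{-1}\chi_1$, the element $\gamma$ is $1$-unlinked by Lemma~\ref{lem:gamma}(iv) (applied to $\chi_1$ and $\chi_2^{-1}$), and a direct computation yields $\gamma\alpha\gamma^{-1}=\chi_2^{-1}\epsilon^{m_1}\chi_2=\chi_2^{-1}\epsilon^{m_2}\chi_2=\beta$.

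\emph{Main obstacle.} The first two branches follow almost immediately from the tools already assembled. The real content lies in the third: after reducing each of $\alpha,\beta$ to a power of $\epsilon$, one must match the two exponents $m_1$ and $m_2$ so that the natural candidate conjugator $\chi_2^{-1}\chi_1$ actually carries $\alpha$ to $\beta$. This is exactly the point at which the conjugacy invariance of the linking number $\lk$ on $B_{n,1}$---the very invariant that motivated $1$-unlinkedness---becomes indispensable.
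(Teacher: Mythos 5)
Your proof is correct and follows essentially the same route as the paper: dispose of the pseudo-Anosov and central cases directly, and in the remaining periodic case conjugate both $\alpha$ and $\beta$ to a power of $\epsilon$ via Lemma~\ref{lem:per2} and compose the two $1$-unlinked conjugators. Your explicit matching of the exponents $m_1=m_2$ via the conjugacy invariance of $\lk$ is a nice touch that the paper leaves implicit (there it follows from $\alpha$ and $\beta$ being conjugate by Theorem~\ref{thm:gon}), but it does not change the substance of the argument.
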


\begin{proof}
If $\alpha$ is pseudo-Anosov, then $\alpha=\beta$ by Lemma~\ref{lem:NT}.
If $\alpha$ is central, then $\alpha=\beta$
because $\beta$ is conjugate to $\alpha$.
In these two cases, we can take the identity as the desired
conjugating element $\gamma$.

Suppose that $\alpha$ is periodic and non-central.
Then both $\alpha$ and $\beta$ are conjugate to $\epsilon^m$
for some $m\not\equiv 0\bmod n-1$ by Lemma~\ref{lem:per}
since they are 1-pure and non-central.
By Lemma~\ref{lem:per2}, there exist 1-unlinked $n$-braids
$\gamma_1$ and $\gamma_2$ such that
$\gamma_1\alpha\gamma_1^{-1}=\epsilon^m=\gamma_2\beta\gamma_2^{-1}$.
Let $\gamma=\gamma_2^{-1}\gamma_1$,
then $\beta=\gamma\alpha\gamma^{-1}$.
Because both $\gamma_1$ and $\gamma_2$ are 1-unlinked, so is $\gamma$.
Because the first strand is the only pure strand of $\alpha$
and $1\in P$, we have $P=\{1\}$.
Therefore $\gamma$ is $P$-straight.
\end{proof}

\begin{definition}\label{def:ext-br}
Let $\alpha$ be an $n$-braid with $\Rext(\alpha)$ standard.
Then there exists a composition $\n=(n_1,\ldots, n_r)$ of $n$ such that
$\Rext(\alpha)=\C_\n$ and $\alpha$ can be expressed as
$$\alpha = \myangle{\alpha_0}_\n
(\alpha_1\oplus\cdots\oplus\alpha_r)_\n.$$
In this case, the tubular $r$-braid $\alpha_0$ of $\alpha$ is specially denoted
by $\alpha_\ext$.
\end{definition}

Note that for non-periodic reducible braids $\alpha$ and $\beta$,
if $\alpha^k =\beta^k$ for a nonzero integer $k$, then
$$\emptyset\neq\Rext(\alpha)=\Rext(\alpha^k)=\Rext(\beta^k)=\Rext(\beta).$$

\begin{lemma}\label{lem:same-ext}
Let $(\alpha,\beta,P,k)$ be given as in Theorem~\ref{thm:main}.
If\/ $\Rext(\alpha)$ is standard and $\alpha_\ext=\beta_\ext$,
then Theorem~\ref{thm:main} is true for $(\alpha,\beta,P,k)$.
\end{lemma}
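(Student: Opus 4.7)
The plan is to decompose the conjugation problem tube-by-tube using the common standard decomposition and then assemble the conjugating braid via the inductive hypothesis of Theorem~\ref{thm:main} and Theorem~\ref{thm:gon}. First, by Lemma~\ref{thm:decom}(ii), write $\alpha=\myangle{\alpha_0}_\n(\alpha_1\oplus\cdots\oplus\alpha_r)_\n$ and $\beta=\myangle{\alpha_0}_\n(\beta_1\oplus\cdots\oplus\beta_r)_\n$ with the common tubular part $\alpha_0=\alpha_\ext=\beta_\ext$, for a composition $\n=(n_1,\ldots,n_r)$ of $n$ with $r\ge 2$; set $\theta=\pi_{\alpha_0}$. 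Since $1\in P$, the definitions of $P_{\n,\cdot}$ give $1\in P_{\n,1}$ (hence $P_{\n,1}\ne\emptyset$), and therefore $1\in P_{\n,0}$; Lemma~\ref{lem:indu}(i) then says each $\alpha_i,\beta_i$ is $P_{\n,i}$-pure and $\alpha_0$ is $P_{\n,0}$-pure, and in particular $\theta(1)=1$ so that $\{1\}$ is a singleton $\theta$-orbit. Note also that $\alpha_0\ast\n=\n$ since $\C_\n$ is a reduction system of $\alpha$.

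Next I would unravel the hypothesis $\alpha^k=\beta^k$ by repeatedly applying Lemma~\ref{thm:decom}(iii)--(vi) to obtain $\alpha^k=\myangle{\alpha_0^k}_\n(\Pi_1^{(k)}\oplus\cdots\oplus\Pi_r^{(k)})_\n$ with $\Pi_j^{(k)}=\alpha_{\theta^{k-1}(j)}\cdots\alpha_{\theta(j)}\alpha_j$, and similarly for $\beta^k$ with entries $\hat\Pi_j^{(k)}$; uniqueness (Lemma~\ref{thm:decom}(i)) yields $\Pi_j^{(k)}=\hat\Pi_j^{(k)}$ for every $j$. Replacing $k$ by a nonzero multiple divisible by every orbit length of $\theta$ (which preserves the hypothesis), on an orbit $O$ of length $d$ these identities reduce to $A_j^{k/d}=B_j^{k/d}$ in $B_{n_j}$, where $A_j=\alpha_{\theta^{d-1}(j)}\cdots\alpha_j$ and $B_j$ are the full cyclic orbit products. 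I would then seek $\gamma=(\gamma_1\oplus\cdots\oplus\gamma_r)_\n$ with trivial tubular part; Lemma~\ref{thm:decom}(iii) reduces $\gamma\alpha\gamma^{-1}=\beta$ to the system $\beta_j=\gamma_{\theta(j)}\alpha_j\gamma_j^{-1}$ for $1\le j\le r$, which on each $\theta$-orbit determines every entry from a single $\gamma_{j_0}$ subject to the closure $\gamma_{j_0}A_{j_0}\gamma_{j_0}^{-1}=B_{j_0}$ (which then automatically propagates around the orbit). The orbits are handled case by case: on $\{1\}$, apply Theorem~\ref{thm:main} inductively on $n$ (valid because $n_1<n$ and $1\in P_{\n,1}$) to $\alpha_1^k=\beta_1^k$, producing a $P_{\n,1}$-straight, 1-unlinked $\gamma_1$; on each singleton orbit $\{i\}$ with $i\in P_{\n,0}\setminus\{1\}$, first conjugate $\alpha_i,\beta_i\in B_{n_i}$ by a permutation braid sending $\min P_{\n,i}$ to position $1$, apply Theorem~\ref{thm:main} inductively (1-unlinkedness of the output being irrelevant here), and conjugate back via Lemma~\ref{lem:gamma}(ii) to obtain a $P_{\n,i}$-straight $\gamma_i$; on an orbit of length $d\ge 2$, the $P_{\n,0}$-purity of $\alpha_0$ forces every index outside $P_{\n,0}$, so no straightness constraint applies, and Theorem~\ref{thm:gon} applied to $A_{j_0}^{k/d}=B_{j_0}^{k/d}$ in $B_{n_{j_0}}$ supplies the initial $\gamma_{j_0}$ conjugating $A_{j_0}$ to $B_{j_0}$.

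Finally, Lemma~\ref{lem:indu}(ii) confirms the assembled $\gamma$ is $P$-straight (the trivial tubular part is $P_{\n,0}$-straight and each $\gamma_i$ is $P_{\n,i}$-straight by construction), and Lemma~\ref{lem:indu}(iii) confirms $\gamma$ is 1-unlinked since $\gamma_1$ is. The main obstacle I anticipate is the combinatorial expansion of $\alpha^k$ into the normal form $\myangle{\alpha_0^k}_\n(\Pi_j^{(k)})_\n$ and the verification that a trivial-tubular $\gamma$ realizes the correct per-tube conjugation system; both hinge on careful bookkeeping with the straightening identities of Lemma~\ref{thm:decom}(iii). A subtler technical point is the relabelling argument on singleton orbits $\{i\}$ with $1\notin P_{\n,i}$, needed to bring the inductive hypothesis of Theorem~\ref{thm:main} into applicable form.
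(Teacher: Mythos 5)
Your proposal is correct and follows essentially the same route as the paper: decompose along the common standard reduction system with tubular part $\alpha_0=\alpha_\ext=\beta_\ext$, raise $k$ so that every orbit length of $\pi_{\alpha_0}$ divides it, and build a conjugator with trivial tubular part by applying the inductive hypothesis of Theorem~\ref{thm:main} (after relabelling) on the fixed tubes and Theorem~\ref{thm:gon} plus propagation of a single seed conjugator around each nontrivial orbit, subject to the closure condition $\gamma_{j_0}A_{j_0}\gamma_{j_0}^{-1}=B_{j_0}$. The only differences are cosmetic or minor: the paper first normalizes $\pi_{\alpha_0}$ so its nontrivial cycles are consecutive blocks and writes the propagated $\gamma_{i,j}$ explicitly, while you should also note the base case $n=2$ and the singleton orbits with $P_{\n,i}=\emptyset$, which are handled directly by Theorem~\ref{thm:gon} exactly as in the paper.
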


\begin{proof}
We will show this lemma by induction on the braid index $n$.
If $n=2$, Theorem~\ref{thm:main} is obvious because $B_{2,1}$ is
infinite cyclic generated by $\sigma_1^2$:
if $\alpha=\sigma_1^{2p}$ and $\beta=\sigma_1^{2q}$,
then $\alpha^k=\beta^k$ implies $p=q$, and hence
$\alpha=\beta$ and the identity is a conjugating element from
$\alpha$ to $\beta$.

Suppose that $n>2$ and that the theorem is true
for braids with less than $n$ strands.
Let $\Rext(\alpha)=\C_\n$ for an $r$-composition $\n$ of $n$.
Let $\alpha_0=\alpha_\ext\in B_r$.
Since $\alpha$ is 1-pure, so is $\alpha_0$, that is, $\pi_{\alpha_0}(1)=1$.
Let $\{ z_2, z_3, \ldots, z_m\}$ be the set of all points other than 1
each of which is fixed by $\pi_{\alpha_0}$.

\begin{claim}
Without loss of generality,
we may assume that $\{ z_2, \ldots, z_m\} =\{2, \ldots, m\}$
(i.e. $\pi_{\alpha_0}(i)=i$ for all $1\le i\le m$)
and each of the other cycles of $\pi_{\alpha_0}$ is of the form
$(i+r_i,\ldots, i+2, i+1)$ for some $i\ge m$ and $r_i\ge 2$.
\end{claim}

\begin{proof}[Proof of Claim]
Choose an $r$-permutation $\theta$ such that $\theta(1)=1$,
$\theta(\{ z_2, \ldots, z_m\})=\{ 2, \ldots, m\}$
and each cycle (of length $\ge 2$)
of $\theta\pi_{\alpha_0}\theta^{-1}$ is of the form $(i+r_i,\ldots, i+2, i+1)$.
Note that $\theta\pi_{\alpha_0}\theta^{-1}$ fixes each point of $\{1,\ldots,m\}$.
Let $\zeta_0$ be an $r$-braid whose induced permutation is $\theta$,
and let $\zeta=\myangle{\zeta_0}_\n$.
Since $\zeta_0$ is 1-pure, $\zeta$ is also 1-pure.
Applying Lemma~\ref{lem:conj}~(i) to $\zeta$ and $(\alpha,\beta,P, k)$,
it suffices to show that Theorem~\ref{thm:main} is true for
$(\zeta\alpha\zeta^{-1}, \zeta\beta\zeta^{-1}, \pi_{\zeta}(P), k)$.
Note that
$\Rext(\zeta\alpha\zeta^{-1})=\zeta*\Rext(\alpha)=\C_{\zeta_0*\n}$
is standard and that
$(\zeta\alpha\zeta^{-1})_\ext
=\zeta_0\alpha_\ext\zeta_0^{-1}
=\zeta_0\beta_\ext\zeta_0^{-1}
=(\zeta\beta\zeta^{-1})_\ext$.
\end{proof}

Using the above claim,
we assume that $\pi_{\alpha_0}(i)=i$ for all $1\le i\le m$
and each of the other cycles of $\pi_{\alpha_0}$ is of the form
$(i+r_i,\ldots, i+2, i+1)$ for some $i\ge m$ and $r_i\ge 2$.
Then $\n$, $\alpha$ and $\beta$ are as follows:
\begin{eqnarray*}
\n &=& ( n_1, \ldots, n_m, \underbrace{n_{m+1},\ldots, n_{m+1}}_{r_{m+1}},
\ldots, \underbrace{n_s,\ldots, n_s}_{r_s}),\\
\alpha & = & \myangle{\alpha_0}_{\n}
  (\alpha_1\oplus\cdots\oplus\alpha_m\oplus(\alpha_{m+1,1}\oplus\cdots\oplus\alpha_{m+1,r_{m+1}})
  \oplus\cdots\oplus(\alpha_{s,1}\oplus\cdots\oplus\alpha_{s,r_s}))_\n, \\
\beta & = & \myangle{\alpha_0}_{\n}
  (\beta_1\oplus\cdots\oplus\beta_m\oplus(\beta_{m+1,1}\oplus\cdots\oplus\beta_{m+1,r_{m+1}})
  \oplus\cdots\oplus(\beta_{s,1}\oplus\cdots\oplus\beta_{s,r_s}))_\n.
\end{eqnarray*}

\medskip

By raising the power $k$ large enough, we may assume that
the lengths $r_i$ of the cycles of $\pi_{\alpha_0}$ are all divisors of $k$.
Let $k=r_ip_i$ for $m< i\le s$.
Then
\begin{eqnarray*}
\alpha^k&=& \myangle{\alpha_0^k}_{\n}
  (\alpha_1^k \oplus\cdots\oplus \alpha_m^k
  \oplus(\tilde\alpha_{m+1,1}^{p_{m+1}}\oplus\cdots\oplus\tilde\alpha_{m+1,r_{m+1}}^{p_{m+1}})
  \oplus\cdots\oplus
  (\tilde\alpha_{s,1}^{p_s}\oplus\cdots\oplus\tilde\alpha_{s,r_s}^{p_s}))_\n,\\
\beta^k&=& \myangle{\alpha_0^k}_{\n}
  (\beta_1^k \oplus\cdots\oplus \beta_m^k
  \oplus(\tilde\beta_{m+1,1}^{p_{m+1}}\oplus\cdots\oplus\tilde\beta_{m+1,r_{m+1}}^{p_{m+1}})
  \oplus\cdots\oplus
  (\tilde\beta_{s,1}^{p_s}\oplus\cdots\oplus\tilde\beta_{s,r_s}^{p_s}))_\n,
\end{eqnarray*}
where
\begin{eqnarray*}
\tilde\alpha_{i,j} &=&
  \alpha_{i,j-r_i+1}\alpha_{i,j-r_i+2}\cdots\alpha_{i,j-1}\alpha_{i,j},\\
\tilde\beta_{i,j} &=&
  \beta_{i,j-r_i+1}\beta_{i,j-r_i+2}\cdots\beta_{i,j-1}\beta_{i,j}
\end{eqnarray*}
for $m< i\le s$ and $1\le j\le r_i$.
Hereafter we regard the second index $j$ of $(i,j)$ as being taken modulo $r_i$.
Since $\alpha^k=\beta^k$, one has
\begin{eqnarray*}
& \alpha_i^k =\beta_i^k & \quad\mbox{for $1\le i\le m$}, \\
& \tilde\alpha_{i,j}^{p_i}=\tilde\beta_{i,j}^{p_i}
& \quad\mbox{for $m < i\le s$ and $1\le j\le r_i$}.
\end{eqnarray*}

Recall that $\alpha$ and $\beta$ are $P$-pure, hence
$\alpha_i$ and $\beta_i$ are $P_{\n,i}$-pure for $1\le i\le m$
by Lemma~\ref{lem:indu}.
Recall also that the induced permutation of $\alpha_0$ fixes no point $i>m$,
hence $P_{\n,i}=\emptyset$ for $i>m$.

\medskip
From now on, we will construct an $n$-braid $\gamma$
such that $\beta=\gamma\alpha\gamma^{-1}$.
It will be of the form
$$
\gamma = (\gamma_1\oplus\cdots\oplus\gamma_m\oplus
  (\gamma_{m+1,1}\oplus\cdots\oplus\gamma_{m+1,r_{m+1}})
  \oplus\cdots\oplus(\gamma_{s,1}\oplus\cdots\oplus\gamma_{s,r_s}))_\n,
$$
where $\gamma_1$ is 1-unlinked and $P_{\n,1}$-straight,
and $\gamma_i$ is $P_{\n,i}$-straight for $2\le i\le m$.
Then $\gamma$ is 1-unlinked by Lemma~\ref{lem:indu}~(iii)
because $\gamma_1$ is 1-unlinked.
And $\gamma$ is $P$-straight by Lemma~\ref{lem:indu}~(ii)
because $\gamma_i$ is $P_{\n,i}$-straight for $1\le i\le m$
and $P_{\n,i}=\emptyset$ for $i>m$.

\smallskip
Note that $\alpha_1^k=\beta_1^k$ and
that $1\in P_{\n,1}$ because $1\in P$.
By the induction hypothesis on the braid index,
there exists a $P_{\n,1}$-straight, 1-unlinked $n_1$-braid $\gamma_1$
with $\beta_1=\gamma_1\alpha_1\gamma_1^{-1}$.

\smallskip
Let $2\le i\le m$.
Note that $\alpha_i^k=\beta_i^k$.
If $P_{\n,i} =\emptyset$, there is an $n_i$-braid $\gamma_i$
such that $\beta_i = \gamma_i\alpha_i\gamma_i^{-1}$ by~\cite{Gon03}.
Suppose $P_{\n,i} \neq\emptyset$.
Then there is an $n_i$-braid $\zeta_i$ with $1\in\pi_{\zeta_i}(P_{\n,i})$.
Since $\alpha_i$ and $\beta_i$ are $P_{\n,i}$-pure,
$\zeta_i\alpha_i\zeta_i^{-1}$ and $\zeta_i\beta_i\zeta_i^{-1}$
are $\pi_{\zeta_i}(P_{\n,i})$-pure $n_i$-braids
with $(\zeta_i\alpha_i\zeta_i^{-1})^k = (\zeta_i\beta_i\zeta_i^{-1})^k$.
By the induction hypothesis on the braid index,
Theorem~\ref{thm:main} is true for
$(\zeta_i\alpha_i\zeta_i^{-1}, \zeta_i\beta_i\zeta_i^{-1},
\pi_{\zeta_i}(P_{\n,i}), k)$,
hence there exists a $\pi_{\zeta_i}(P_{\n,i})$-straight
$n_i$-braid $\chi_i$
such that $\zeta_i\beta_i\zeta_i^{-1}
= \chi_i (\zeta_i\alpha_i\zeta_i^{-1}) \chi_i^{-1}$.
Let $\gamma_i = \zeta_i^{-1}\chi_i\zeta_i$.
Then $\gamma_i$ is a $P_{\n,i}$-straight $n_i$-braid with
$\beta_i = \gamma_i\alpha_i\gamma_i^{-1}$.

\smallskip
Recall that $\tilde\alpha_{i,r_i}^{p_i}
=\tilde\beta_{i,r_i}^{p_i}$ for all $m< i\le s$,
which implies that there are $\zeta_i\in B_{n_i}$ with
$$
\tilde\beta_{i,r_i}=\zeta_i\tilde\alpha_{i,r_i}\zeta_i^{-1}.
$$
For $m < i\le s$ and $1\le j\le r_i$, define $\gamma_{i,j}$ by
$$
\gamma_{i,j} = (\beta_{i,j}^{-1}\cdots\beta_{i,2}^{-1}\beta_{i,1}^{-1})\zeta_i
(\alpha_{i,1}\alpha_{i,2}\cdots\alpha_{i,j}).
$$
Then, for $m< i\le s$ and $1< j \le r_i$,
\begin{eqnarray*}
\gamma_{i,r_i}\alpha_{i,1}\gamma_{i,1}^{-1}
&=&  (\beta_{i,r_i}^{-1}\cdots\beta_{i,1}^{-1}\zeta_i
  \alpha_{i,1}\cdots\alpha_{i,r_i})
  \alpha_{i,1} (\alpha_{i,1}^{-1}\zeta_i^{-1}\beta_{i,1})
  = \tilde\beta_{i,r_i}^{-1}\zeta_i\tilde\alpha_{i,r_i}\zeta_i^{-1}\beta_{i,1}
  = \beta_{i,1}, \\
\gamma_{i,j-1}\alpha_{i,j}\gamma_{i,j}^{-1}
&=& (\beta_{i,j-1}^{-1}\cdots\beta_{i,1}^{-1}\zeta_i
  \alpha_{i,1}\cdots\alpha_{i,j-1})
  \alpha_{i,j}
  (\alpha_{i,j}^{-1}\cdots\alpha_{i,1}^{-1}\zeta_i^{-1}
  \beta_{i,1}\cdots\beta_{i,j})
  = \beta_{i,j}.
\end{eqnarray*}
Therefore
$$
\gamma_{i,j-1}\alpha_{i,j}\gamma_{i,j}^{-1}
=\beta_{i,j}
\qquad\mbox{for $m< i\le s$ and $1\le j \le r_i$}.
$$

\smallskip
So far, we have constructed the desired $P$-straight and 1-unlinked $n$-braid
$$
\gamma = (\gamma_1\oplus\cdots\oplus\gamma_m\oplus
  (\gamma_{m+1,1}\oplus\cdots\oplus\gamma_{m+1,r_{m+1}})
  \oplus\cdots\oplus(\gamma_{s,1}\oplus\cdots\oplus\gamma_{s,r_s}))_\n.
$$
It remains to show $\beta=\gamma\alpha\gamma^{-1}$,
which will be done by a direct computation.
In the following, $\bigoplus_{i=1}^\ell \chi_i$ means
$\chi_1\oplus \chi_2\oplus\cdots\oplus \chi_\ell$.
\begin{eqnarray*}
\gamma\alpha
&=& (\gamma_1\oplus\cdots\oplus\gamma_m \oplus
      \bigoplus_{i=m+1}^s \bigoplus_{j=1}^{r_i}\gamma_{i,j})_\n
  \cdot\myangle{\alpha_0}_{\n}
      (\alpha_1\oplus\cdots\oplus\alpha_m\oplus
      \bigoplus_{i=m+1}^s \bigoplus_{j=1}^{r_i}\alpha_{i,j})_\n \\
&=& \myangle{\alpha_0}_{\n}\cdot
  (\gamma_1\oplus\cdots\oplus\gamma_m \oplus
      \bigoplus_{i=m+1}^s \bigoplus_{j=1}^{r_i}\gamma_{i,j-1})_\n
  \cdot
      (\alpha_1\oplus\cdots\oplus\alpha_m\oplus
      \bigoplus_{i=m+1}^s \bigoplus_{j=1}^{r_i}\alpha_{i,j})_\n \\
& = & \myangle{\alpha_0}_{\n}
  (\gamma_1\alpha_1 \oplus\cdots\oplus\gamma_m\alpha_m
  \oplus \bigoplus_{i=m+1}^s\bigoplus_{j=1}^{r_i}
    \gamma_{i,j-1}\alpha_{i,j})_\n, \\
\beta\gamma
&=&
  \myangle{\alpha_0}_{\n}
      (\beta_1\oplus\cdots\oplus\beta_m\oplus
      \bigoplus_{i=m+1}^s \bigoplus_{j=1}^{r_i}\beta_{i,j})_\n
  \cdot (\gamma_1\oplus\cdots\oplus\gamma_m \oplus
      \bigoplus_{i=m+1}^s \bigoplus_{j=1}^{r_i}\gamma_{i,j})_\n\\
&=& \myangle{\alpha_0}_{\n}
  (\beta_1\gamma_1\oplus\cdots\oplus\beta_m\gamma_m \oplus
      \bigoplus_{i=m+1}^s \bigoplus_{j=1}^{r_i}\beta_{i,j}\gamma_{i,j})_\n.
\end{eqnarray*}
Because
$\gamma_i\alpha_i\gamma_i^{-1}=\beta_i$ for $1\le i\le m$
and $\gamma_{i,j-1}\alpha_{i,j}\gamma_{i,j}^{-1}=\beta_{i,j}$
for $m< i\le s$ and $1\le j \le r_i$,
we have $\gamma\alpha=\beta\gamma$, and hence $\gamma\alpha\gamma^{-1}=\beta$.
\end{proof}

\begin{definition}
Let $r$, $s$ and $d$ be integers with $s\ge 2$, $d\ge 1$ and $r=ds+1$.
For $1\le j\le d$, define an $r$-braid $\mu_{s,j}$ as
$$
\mu_{s,j}=(\sigma_{js}\sigma_{js-1}\cdots\sigma_2\sigma_1)
(\sigma_1\sigma_2\cdots\sigma_{(j-1)s}\sigma_{(j-1)s+1}).
$$
Define $\mu_s$ as $\mu_s =\mu_{s,1}\mu_{s,2}\cdots\mu_{s,d}$.
See Figure~\ref{fig:mu} for the case $r=7$, $s=3$ and $d=2$.
\end{definition}

\begin{figure}
\tabcolsep=1em
\begin{tabular}{ccc}
\includegraphics[scale=.8]{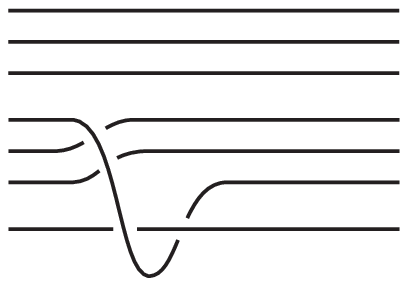} &
\includegraphics[scale=.8]{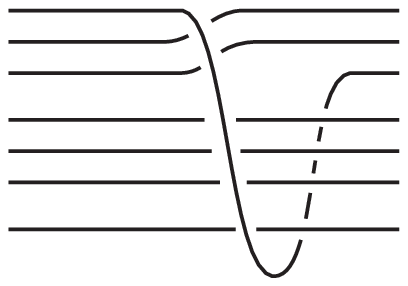} &
\includegraphics[scale=.8]{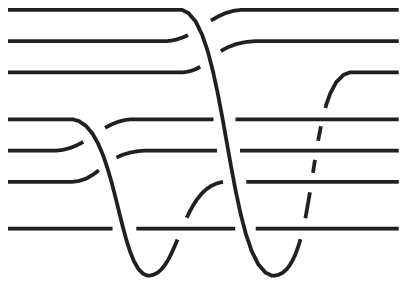} \\
(a) $\mu_{3,1}$ &
(b) $\mu_{3,2}$ &
(c) $\mu_{3}=\mu_{3,1}\mu_{3,2}$
\end{tabular}
\caption{$\mu_{3,1}$, $\mu_{3,2}$ and $\mu_3$ when $r=7$}
\label{fig:mu}
\end{figure}

It is easy to see the following.
\begin{itemize}
\item $\mu_s$ is conjugate to $\epsilon^d_{(r)}$
because $(\mu_s)^s=\Delta_{(r)}^2=(\epsilon^d_{(r)})^s$.
\item For any $1\le i, j\le d$,
$\mu_{s,i}$ and $\mu_{s,j}$ mutually commute.
\item $\lk(\mu_{s,j})=1$ for $1\le j\le d$.
\end{itemize}

\begin{lemma}\label{lem:per-ext}
Let $P$ be a subset of\/ $\{1,\ldots,n\}$ with $1\in P$.
Let $\alpha$ be a $P$-pure $n$-braid
with $\Rext(\alpha)$ standard, hence
$\Rext(\alpha)=\C_\n$ for a composition $\n=(n_1,\ldots,n_r)$ of\/ $n$.
Let $\alpha_\ext$ be periodic and non-central.
\begin{itemize}
\item[(i)]
For each $2\le i\le r$, there exists
a $P$-straight $n$-braid $\gamma$ such that
$\gamma\alpha=\alpha\gamma$ and $\lk(\gamma)=n_i$.

\item[(ii)]
Let $\chi = \myangle{\chi_0 }_{\n} ( \chi_1\oplus\cdots\oplus\chi_r)_\n$
be $P$-straight such that $\chi_1$ is 1-unlinked.
Then there exists a $P$-straight $n$-braid $\gamma$ such that
$\gamma\alpha =\alpha\gamma$ and $\lk(\gamma)= -\lk(\chi)$.
\end{itemize}
\end{lemma}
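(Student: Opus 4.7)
Plan for part~(i): Since $\alpha_\ext$ is $1$-pure, periodic and non-central, Lemma~\ref{lem:per} identifies it up to conjugacy with $\epsilon_{(r)}^m$ for some $m\not\equiv 0\pmod{r-1}$, and with $d=\gcd(m,r-1)$ and $s=(r-1)/d$ we have $r=ds+1$ and $\alpha_\ext$ is conjugate to $\mu_s^{k}$ for $k=m/d$ coprime to $s$, using that $\mu_s$ is conjugate to $\epsilon_{(r)}^d$. A $1$-pure conjugator $\eta\in B_r$ can be chosen, and then $\tilde\eta=\myangle{\eta}_\n\in B_n$ is $1$-pure and fixes strands $1,\ldots,n_1$ pointwise. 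Moreover Lemma~\ref{lem:per} forces tube~$1$ to be the unique tube fixed by $\pi_{\alpha_\ext}$, so $P\subseteq\{1,\ldots,n_1\}$ and $\pi_{\tilde\eta}(P)=P$; using Lemma~\ref{lem:gamma}, one may replace $\alpha$ by $\tilde\eta\alpha\tilde\eta^{-1}$ and assume $\alpha=\myangle{\mu_s^k}_\n(\alpha_1\oplus\cdots\oplus\alpha_r)_\n$. For the given $i\in\{2,\ldots,r\}$, let $j$ index the orbit $O_j$ of $\pi_{\mu_s}$ containing $i$, so $|O_j|=s$ and all tubes in $O_j$ have the common size $n_i$. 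My candidate is
\[
\gamma=\myangle{\mu_{s,j}}_\n\bigl(1\oplus\gamma_2\oplus\cdots\oplus\gamma_r\bigr)_\n,
\]
with $\gamma_{i'}=1$ for $i'\notin O_j$. By Lemma~\ref{lem:LinkingNo},
\[
\lk(\gamma)=\sum_{i'\in O_j}n_{i'}\lk_{i'}(\mu_{s,j})=n_i\,\lk(\mu_{s,j})=n_i,
\]
and $\gamma$ is $P$-straight because $\gamma_1=1$, the tubes in $O_j$ contain no strand of $P$, and deleting the non-$P$ strands collapses $\myangle{\mu_{s,j}}_\n$ to the identity. The commutation $\gamma\alpha=\alpha\gamma$ follows, via Lemma~\ref{thm:decom} and $\mu_{s,j}\mu_s=\mu_s\mu_{s,j}$, from the cyclic recursion $\gamma_{\sigma^k(i')}=\alpha_{\sigma(i')}\gamma_{i'}\alpha_{i'}^{-1}$ on $i'\in O_j$, where $\sigma=\pi_{\mu_s}|_{O_j}$. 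Iterating $s$ times, the closure condition reads $\gamma_{i'_0}(\alpha^s)_{i'_0}\gamma_{i'_0}^{-1}=(\alpha^s)_{\sigma(i'_0)}$; comparing the two expansions of $\alpha\cdot\alpha^s=\alpha^s\cdot\alpha$ tube-by-tube gives $(\alpha^s)_{\sigma^k(i')}=\alpha_{i'}(\alpha^s)_{i'}\alpha_{i'}^{-1}$, and iterating this $t=k^{-1}\bmod s$ times yields an explicit conjugator $Q_t=\alpha_{\sigma^{(t-1)k}(i'_0)}\cdots\alpha_{i'_0}$ from $(\alpha^s)_{i'_0}$ to $(\alpha^s)_{\sigma(i'_0)}$; setting $\gamma_{i'_0}=Q_t$ and propagating the recursion produces the desired $\gamma$.

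Plan for part~(ii): Since $\chi_1$ is $1$-unlinked, Lemma~\ref{lem:LinkingNo} gives $\lk(\chi)=\sum_{i=2}^{r}n_i\lk_i(\chi_0)$. For each $i$, part~(i) produces a $P$-straight $\gamma^{(i)}\in B_n$ commuting with $\alpha$ and with $\lk(\gamma^{(i)})=n_i$; setting $\gamma=\prod_{i=2}^{r}(\gamma^{(i)})^{-\lk_i(\chi_0)}$, one checks: $\gamma$ commutes with $\alpha$ (each factor does); $\gamma$ is $P$-straight by Lemma~\ref{lem:gamma}(iv); and $\lk(\gamma)=-\sum_{i=2}^{r}n_i\lk_i(\chi_0)=-\lk(\chi)$.

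The main obstacle is in part~(i): checking that the cyclic system on $O_j$ admits a solution. This reduces to a single identity, $(\alpha^s)_{\sigma^k(i')}=\alpha_{i'}(\alpha^s)_{i'}\alpha_{i'}^{-1}$, which in turn follows from $\alpha\cdot\alpha^s=\alpha^s\cdot\alpha$ read at each tube of $O_j$; once this is in hand, the explicit $Q_t$ above supplies the basepoint value $\gamma_{i'_0}$ and the remaining $\gamma_{i'}$'s are determined by propagating the recursion around the orbit.
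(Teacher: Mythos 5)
Your proposal is correct and follows essentially the same route as the paper's proof: normalize $\alpha_\ext$ to $\mu_s^t$ by a $1$-pure tubular conjugation, take $\gamma=\myangle{\mu_{s,j}}_\n(\cdots)_\n$ supported on the single orbit $O_j$, compute $\lk(\gamma)=n_i$ via Lemma~\ref{lem:LinkingNo}, and solve the cyclic commutation recursion — your basepoint value $Q_t$ (with $t=k^{-1}\bmod s$) is exactly the paper's explicit interior braid $\gamma_{j,\ell}=\alpha_{j,\ell-(a-1)t}\cdots\alpha_{j,\ell-t}\alpha_{j,\ell}$, and part~(ii) is identical. The only difference is presentational: the paper writes down the solution and verifies the recursion directly, whereas you derive it from the closure condition $\gamma_{i'_0}(\alpha^s)_{i'_0}\gamma_{i'_0}^{-1}=(\alpha^s)_{\sigma(i'_0)}$.
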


\begin{proof}
(i) \ \
Note that $\alpha_\ext$ is 1-pure because $\alpha$ is 1-pure.
In addition, $\alpha_\ext$ is periodic and non-central.
Thus $\alpha_\ext$ is conjugate to $\epsilon_{(r)}^m$ for some $m\not\equiv 0 \bmod r-1$.
Let $d=\gcd(m,r-1)$, $m=dt$ and $r-1=ds$.

\begin{claim}
Without loss of generality, we may assume $\alpha_\ext=\mu_s^t$.
\end{claim}

\begin{proof}[Proof of Claim]
Assume that (i) holds for braids $\alpha'$ with $\alpha'_\ext=\mu_s^t$.
Since $\alpha_\ext$ is conjugate to $\epsilon_{(r)}^{m} =\epsilon_{(r)}^{dt}$
and $\epsilon_{(r)}^{d}$ is conjugate to $\mu_s$,
$\mu_s^t$ is conjugate to $\alpha_\ext$.
Since both $\alpha_\ext$ and $\mu_{s}^{t}$ are 1-pure braids
that are periodic and non-central,
they have the first strand as the only pure strand.
Thus there exists a 1-pure $r$-braid $\zeta_0$ such that
$\mu_{s}^{t} = \zeta_0\alpha_\ext\zeta_0^{-1}$.
Let
$$
\zeta=\myangle{\zeta_0}_\n\quad\mbox{and}\quad
\beta = \zeta\alpha\zeta^{-1}.
$$
Since $\alpha$ is $P$-pure, $\beta$ is $\pi_{\zeta}(P)$-pure.
Since $\zeta$ is 1-pure and $1\in P$, we have $1\in\pi_\zeta(P)$.
Since $\Rext(\beta)=\zeta\ast\Rext(\alpha)=\zeta*\C_{\n}=\C_{\zeta_0\ast\n}$,
$\Rext(\beta)$ is standard and
$\beta_\ext=\zeta_0\alpha_\ext\zeta_0^{-1}=\mu_{s}^{t}$.
Fix any $2\le i\le r$.
Since $\zeta_0\ast\n=(n_1, n'_2,\ldots,n'_r)$, where
$(n'_2,\ldots,n'_r)$ is a rearrangement of $(n_2,\ldots,n_r)$,
there exists $2\le j\le r$ such that $n_i=n'_j$.
By the assumption, there exists a $\pi_{\zeta}(P)$-straight
$n$-braid $\chi$ such that $\chi\beta\chi^{-1}=\beta$ and $\lk(\chi)=n'_j$.
Let $\gamma=\zeta^{-1}\chi\zeta$.
Then $\gamma\alpha\gamma^{-1}=\alpha$.
Because $\gamma$ is $P$-straight
with $\lk(\gamma)=\lk(\chi)=n'_j=n_i$,
we are done.
\end{proof}

Now, we assume $\alpha_\ext=\mu_s^t$.
Then $\alpha$ can be expressed as
$$
\alpha=\myangle{\mu_s^t}_\n(\alpha_1\oplus
(\alpha_{1,1}\oplus\alpha_{1,2}\oplus\cdots\oplus\alpha_{1,s})\oplus\cdots\oplus
(\alpha_{d,1}\oplus\alpha_{d,2}\oplus\cdots\oplus\alpha_{d,s}) )_\n.
$$
For convenience, let $[k,\ell]$ denote the integer $(k-1)s+\ell+1$
for $1\le k\le d$ and $1\le \ell\le s$. Then
$$
\n=(n_1,n_2,\ldots,n_r)
=(n_1,\underbrace{n_{[1,1]},n_{[1,2]},\ldots,n_{[1,s]}}_s,
\ldots,\underbrace{n_{[d,1]},n_{[d,2]},\ldots,n_{[d,s]}}_s).
$$
Hereafter we regard the second index $\ell$ of $[k,\ell]$
as being taken modulo $s$.
Notice the following.
\begin{itemize}
\item
The induced permutation of $\mu_s^t$
fixes 1 and maps $[k,\ell]$ to $[k,\ell-t]$.
Because $\gcd(s, t)=1$, the induced permutation of $\mu_s^t$ has
a single fixed point and each of the other cycles has length $s$.
Therefore
$$P=P_{\n,1} \quad\mbox{and}\quad
n_{[k,1]} = n_{[k,2]} =\cdots = n_{[k,s]} \quad\mbox{for $1\le k\le d$}.
$$

\item
The induced permutation of $\mu_{s,j}$
maps $[j,\ell]$ to $[j,\ell -1]$ for $1\le \ell\le s$,
and it fixes the other points.
\item
$\lk_{[j,1]}(\mu_{s,j})=1$, and
$\lk_{[k,\ell]}(\mu_{s,j})=0$ if $(k,\ell)\ne(j,1)$.
\end{itemize}

Because $\gcd(s, t)=1$,
there exist integers $a > 0$ and $b$ such that $a t+ b s=1$.
Fix any $2\le i\le r$.
Then $n_i$ is equal to $n_{[j,1]}$ for some $1\le j\le d$.
Define an $n$-braid $\gamma$ to be
$$
\gamma=\myangle{\mu_{s,j}}_\n(\gamma_1\oplus
(\gamma_{1,1}\oplus\gamma_{1,2}\oplus\cdots\oplus\gamma_{1,s})\oplus\cdots\oplus
(\gamma_{d,1}\oplus\gamma_{d,2}\oplus\cdots\oplus\gamma_{d,s}) )_\n,
$$
where
$\gamma_1=1$, $\gamma_{k, \ell}=1$ for $k\ne j$
and
$$
\gamma_{j,\ell}
= \alpha_{j, \ell-(a-1)t}\alpha_{j, \ell-(a-2)t}\cdots
\alpha_{j, \ell-2t} \alpha_{j, \ell-t} \alpha_{j, \ell}
\quad\mbox{for $1\le \ell\le s$}.
$$
Then $\gamma$ is $P$-straight because $P = P_{\n,1}$,
$\mu_{s,j}$ is 1-pure, and $\gamma_1=1$.
In addition, by Lemma~\ref{lem:LinkingNo},
$$
\lk(\gamma)
=\lk(\gamma_1)+\sum_{k=1}^d\sum_{\ell=1}^s
  n_{[k,\ell]} \lk_{[k,\ell]}(\mu_{s,j})
=n_{[j,1]}\lk_{[j,1]}(\mu_{s,j})=n_{[j,1]}=n_i.
$$

\smallskip
Now, it remains to show $\alpha\gamma = \gamma\alpha$.
We will do it by a straightforward computation
together with the following claim.

\begin{claim}
For $1\le \ell\le s$, we have
$\alpha_{j,\ell -1}\gamma_{j,\ell} = \gamma_{j,\ell-t} \alpha_{j,\ell}$.
\end{claim}

\begin{proof}[Proof of Claim]
Recall that
$\gamma_{j,\ell} = \alpha_{j, \ell-(a-1)t}\alpha_{j, \ell-(a-2)t}\cdots
\alpha_{j, \ell-2t} \alpha_{j, \ell-t} \alpha_{j, \ell}$.
Hence
\begin{eqnarray*}
\alpha_{j,\ell -1}\gamma_{j,\ell} &=& \alpha_{j,\ell -1}
\alpha_{j, \ell-(a-1)t}\alpha_{j, \ell-(a-2)t}\cdots
\alpha_{j, \ell-2t} \alpha_{j, \ell-t} \alpha_{j, \ell}, \\
\gamma_{j,\ell-t} \alpha_{j,\ell} &=&
\alpha_{j, \ell-at}\alpha_{j, \ell-(a-1)t}\alpha_{j, \ell-(a-2)t}\cdots
\alpha_{j, \ell-2t} \alpha_{j, \ell-t}\alpha_{j,\ell}.
\end{eqnarray*}
Notice that $\alpha_{j,\ell -1} = \alpha_{j, \ell-at}$ because $at\equiv 1 \bmod s$.
Therefore $\alpha_{j,\ell -1}\gamma_{j,\ell} = \gamma_{j,\ell-t} \alpha_{j,\ell}$.
\end{proof}

For simplicity of notations, let
$$
\begin{array}{ll}
\tilde\alpha_k=(\alpha_{k,1}\oplus\cdots\oplus\alpha_{k,s})_{\n_k},\qquad
&\tilde\alpha_k^{(p)}=(\alpha_{k,p+1}\oplus\cdots\oplus\alpha_{k,p+s})_{\n_k},\\
\tilde\gamma_k=(\gamma_{k,1}\oplus\cdots\oplus\gamma_{k,s})_{\n_k},\qquad
&\tilde\gamma_k^{(p)}=(\gamma_{k,p+1}\oplus\cdots\oplus\gamma_{k,p+s})_{\n_k},
\end{array}
$$
where $1\le k\le d$, $\n_k=(n_{[k,1]},\ldots,n_{[k,s]})$
and $p$ is an integer.
Then
$$
\alpha=\myangle{\mu_s^t}_\n(\alpha_1\oplus
\tilde\alpha_1\oplus\cdots\oplus\tilde\alpha_d)_\n
\quad\mbox{and}\quad
\gamma=\myangle{\mu_{s,j}}_\n(\gamma_1\oplus
\tilde\gamma_1\oplus\cdots\oplus\tilde\gamma_d)_\n.
$$
Because $\gamma_1=1$ and $\tilde\gamma_k=1$ for $k\ne j$,
we just write $\gamma=\myangle{\mu_{s,j}}_\n(\cdots\oplus 1
\oplus\tilde\gamma_j\oplus 1\oplus\cdots)_\n$.
Then
\begin{eqnarray*}
\alpha\gamma &=&
\myangle{\mu_s^t}_\n(\alpha_1\oplus\tilde\alpha_1
  \oplus\cdots\oplus\tilde\alpha_d)_\n
  \cdot
\myangle{\mu_{s,j}}_\n(\cdots\oplus 1
  \oplus\tilde\gamma_j\oplus 1\oplus\cdots)_\n\\
&=& \myangle{\mu_s^t}_\n\cdot \myangle{\mu_{s,j}}_\n\cdot
  (\alpha_1\oplus \cdots\oplus\tilde\alpha_{j-1}
  \oplus\tilde\alpha_j^{(-1)}\oplus\tilde\alpha_{j+1}
  \oplus\cdots\oplus\tilde\alpha_d)_\n
  \cdot(\cdots\oplus1\oplus\tilde\gamma_j\oplus1\oplus\cdots)_\n \\
&=& \myangle{\mu_s^t\mu_{s,j}}_\n
  (\alpha_1\oplus \tilde\alpha_1\oplus\cdots\oplus\tilde\alpha_{j-1}
  \oplus\tilde\alpha_j^{(-1)}\tilde\gamma_j\oplus\tilde\alpha_{j+1}
  \oplus\cdots\oplus\tilde\alpha_d)_\n,\\
\gamma\alpha &=&
\myangle{\mu_{s,j}}_\n(\cdots\oplus 1\oplus\tilde\gamma_j \oplus 1
  \oplus\cdots)_\n
  \cdot
\myangle{\mu_s^t}_\n(\alpha_1\oplus\tilde\alpha_1
  \oplus\cdots\oplus\tilde\alpha_d)_\n\\
&=& \myangle{\mu_{s,j}}_\n\cdot\myangle{\mu_s^t}_\n\cdot
  (\cdots \oplus 1 \oplus\tilde\gamma_j^{(-t)} \oplus 1
  \oplus\cdots)_\n\cdot
  (\alpha_1\oplus\tilde\alpha_1\oplus\cdots\oplus\tilde\alpha_d)_\n\\
&=& \myangle{\mu_{s,j}\mu_s^t}_\n
  (\alpha_1\oplus \tilde\alpha_1\oplus\cdots\oplus\tilde\alpha_{j-1}
  \oplus\tilde\gamma_j^{(-t)}\tilde\alpha_j\oplus\tilde\alpha_{j+1}
  \oplus\cdots\oplus\tilde\alpha_d)_\n.
\end{eqnarray*}
From the above equations, since $\mu_{s,j}\mu_s=\mu_s\mu_{s,j}$, we can see that
$\alpha\gamma=\gamma\alpha$ if and only if
$\tilde\alpha_j^{(-1)}\tilde\gamma_j=\tilde\gamma_j^{(-t)}\tilde\alpha_j$.
On the other hand,
\begin{eqnarray*}
\tilde\alpha_j^{(-1)}\tilde\gamma_j
&=& (\alpha_{j,s}\oplus\alpha_{j,1}\oplus\cdots\oplus\alpha_{j,s-1})_{\n_j}
  \cdot (\gamma_{j,1}\oplus\cdots\oplus\gamma_{j,s})_{\n_j}
  = \bigoplus_{\ell=1}^s \alpha_{j,\ell -1}\gamma_{j,\ell}, \\
\tilde\gamma_j^{(-t)}\tilde\alpha_j
&=& (\gamma_{j,1-t}\oplus\cdots\oplus\gamma_{j,s-t})_{\n_j}
  \cdot(\alpha_{j,1}\oplus\cdots\oplus\alpha_{j,s})_{\n_j}
  = \bigoplus_{\ell=1}^s \gamma_{j,\ell-t}\alpha_{j,\ell},
\end{eqnarray*}
where $\n_j = (n_{[j,1]}, n_{[j,2]},\ldots, n_{[j,s]})$.
By the above claim, we are done.

\medskip
(ii) \ \
As $\lk(\chi_1)=0$, we have
$\lk(\chi)=\sum_{i=2}^r n_i\lk_i(\chi_0)$.
By (i), for each $2\le i\le r$, there exists
a $P$-straight $n$-braid $\zeta_{i}$
such that $\lk(\zeta_{i})= n_i$ and $\zeta_{i}$ commutes with $\alpha$.
Let $\gamma_{i} = \zeta_{i}^{-\lk_i(\chi_0)}$,
then $\gamma_{i}$ is a $P$-straight $n$-braid such that
it commutes with $\alpha$ and $\lk(\gamma_{i})= -n_i\lk_i(\chi_0)$.
Let $\gamma=\gamma_{2}\gamma_{3}\cdots\gamma_{r}$.
Then $\gamma$ is a $P$-straight $n$-braid such that $\gamma\alpha =\alpha\gamma$.
Moreover, $\lk(\gamma)= - \sum_{i=2}^r n_i\lk_i(\chi_0)= -\lk(\chi)$.
\end{proof}

Now we are ready to prove Theorem~\ref{thm:main}.

\begin{proof}[\textbf{Proof of Theorem~\ref{thm:main}}]
We will show the theorem by induction on the braid index $n$.
If $n=2$, Theorem~\ref{thm:main} is obvious as we have observed
in the proof of Lemma~\ref{lem:same-ext}.
Suppose that $n>2$ and that the theorem is true
for braids with less than $n$ strands.

Recall that $1\in P\subset\{1,\ldots,n\}$, and that
$\alpha$ and $\beta$ are $P$-pure $n$-braids
such that $\alpha^k=\beta^k$ for some $k\ne 0$.
If $\alpha$ is either pseudo-Anosov or periodic, the theorem is true
by Lemma~\ref{lem:irred}.
Thus we assume that $\alpha$ is reducible and non-periodic.

\smallskip

If $\Rext(\alpha)$ is not standard, choose $\zeta\in B_{n,1}$ such that
$\zeta*\Rext(\alpha)=\Rext(\zeta\alpha\zeta^{-1})$ is standard.
By Lemma~\ref{lem:conj}, it suffices to prove the theorem
for $(\zeta\alpha\zeta^{-1},\zeta\beta\zeta^{-1},\pi_{\zeta}(P), k)$.
Therefore, without loss of generality,
we assume that $\Rext(\alpha)$ is standard.

\smallskip
There exists a composition $\n=(n_1,\ldots,n_r)$ of $n$ such that
$\Rext(\alpha)=\Rext(\beta)=\C_\n$,
hence $\alpha$ and $\beta$ are expressed as
$$
\alpha = \myangle{\alpha_0}_{\n}(\alpha_1\oplus\alpha_2\oplus\cdots\oplus\alpha_r)_{\n}
\quad\mbox{and}\quad
\beta = \myangle{\beta_0}_{\n} ( \beta_1\oplus\beta_2\oplus\cdots\oplus\beta_r)_{\n}.
$$

Since $\alpha$ and $\beta$ are $P$-pure,
$\alpha_i$ and $\beta_i$ are $P_{\n,i}$-pure for $i=0,1$ by Lemma~\ref{lem:indu}.
In particular, because $\alpha$ and $\beta$ are 1-pure, the braids
$\alpha_0$ and $\beta_0$ are 1-pure.
Hence
$$
\alpha^k = \myangle{\alpha_0^k}_{\n}(\alpha_1^k\oplus\cdots)_{\n}
\quad\mbox{and}\quad
\beta^k = \myangle{\beta_0^k}_{\n}(\beta_1^k\oplus\cdots)_{\n}.
$$
(Here the second interior braid of $\alpha^k$ is not necessarily $\alpha_2^k$
unlike the first interior braid $\alpha_1^k$.)
Since $\alpha^k = \beta^k$, we have $\alpha_0^k = \beta_0^k$ and $\alpha_1^k = \beta_1^k$.

Note that $\alpha_0$ is periodic or pseudo-Anosov.
If $\alpha_0$ is central, then it is obvious that $\alpha_0=\beta_0$.
If $\alpha_0$ is pseudo-Anosov, then $\alpha_0=\beta_0$
by Lemma~\ref{lem:NT}.
For these two cases, we are done by Lemma~\ref{lem:same-ext}.
Therefore we assume that $\alpha_0$ is periodic and non-central.

\medskip

Since $\alpha^k = \beta^k$, there exists $\zeta\in B_n$
with $\beta=\zeta\alpha\zeta^{-1}$ by Theorem~\ref{thm:gon}.
Notice that
$$
\Rext(\alpha)
=\Rext(\alpha^k)
=\Rext(\beta^k)
= \Rext(\beta)
= \Rext(\zeta\alpha\zeta^{-1})
= \zeta*\Rext(\alpha),
$$
i.e. $\zeta$ preserves the curve system $\Rext(\alpha)=\C_\n$.
Hence $\zeta$ can be expressed as
$$
\zeta = \myangle{\zeta_0 }_{\n} ( \zeta_1\oplus\zeta_2\oplus\cdots\oplus\zeta_r)_\n.
$$
We will replace $\zeta_1$ in the above expression of $\zeta$
with another braid $\xi_1$ in order to make it $P$-straight,
and then will multiply it by another $n$-braid $\xi'$ in order to make it
1-unlinked.

Because $\alpha_0$ and $\beta_0$ are 1-pure, periodic and non-central,
they have the first strand as the only pure strand by Corollary~\ref{cor:per}.
Hence the $r$-braid $\zeta_0$ must be 1-pure because
$\beta_0= \zeta_0 \alpha_0 \zeta_0^{-1}$.
In addition, $P_{\n,0} = \{ 1\}$ and $P=P_{\n,1}$.
Recall that $\alpha_1$ and $\beta_1$ are $P_{\n,1}$-pure.
Because $\alpha_1^k = \beta_1^k$ and $1\in P_{\n,1}$,
there exists a $P_{\n,1}$-straight, 1-unlinked $n_1$-braid $\xi_1$
such that $\beta_1=\xi_1\alpha_1\xi_1^{-1}$,
by the induction hypothesis on the braid index.
Let
$$
\xi= \myangle{\zeta_0}_{\n}
(\xi_1\oplus\zeta_2\oplus\zeta_3\oplus\cdots\oplus\zeta_r)_\n.
$$
Then $\xi$ is $P$-straight since $P=P_{\n,1}$, $\zeta_0$ is 1-pure
and $\xi_1$ is $P_{\n,1}$-straight.
Notice that $\zeta$ and $\xi$ are the same except for the first
interior braids, $\zeta_1$ and $\xi_1$.
Notice also that $\zeta_1\alpha_1\zeta_1^{-1}=\beta_1 = {\xi_1}\alpha_1{\xi_1}^{-1}$.
Therefore $\xi\alpha\xi^{-1} = \zeta\alpha\zeta^{-1}=\beta$.

By Lemma~\ref{lem:per-ext}~(ii),
there exists a $P$-straight $n$-braid $\xi'$ such that
$\xi'\alpha=\alpha\xi'$ and $\lk(\xi')=-\lk(\xi)$.
Let $\gamma=\xi\xi'$.
Then $\gamma$ is $P$-straight and 1-unlinked, and
$\gamma\alpha \gamma^{-1} = \xi\alpha\xi^{-1} =\beta$.
\end{proof}

\subsection*{Acknowledgements}

This work was done partially while the authors were visiting the Institute for
Mathematical Sciences, National University of Singapore in 2007.
We thank the institute for supporting the visit.
This work was supported by the National Research Foundation of Korea
(NRF) grant funded by the Korea government (MEST)
(No.~2009-0063965).

\end{document}